\theoremstyle{plain}
\newtheorem{Theorem}{Theorem}[section]
\newtheorem{Proposition}[Theorem]{Proposition}
\newtheorem{Lemma}[Theorem]{Lemma}
\newtheorem{Corollary}[Theorem]{Corollary}
\newtheorem{Problem}[Theorem]{Problem}
\theoremstyle{definition}
\newtheorem{Definition}{Definition}
\theoremstyle{remark}
\newtheorem{Remark}{{\bf Remark}}
\newtheorem{Example}{Example}
\newcommand{\C}{{\mathbb{C}}}
\newcommand{\R}{{\mathbb{R}}}
\newcommand{\Z}{{\mathbb{Z}}}
\newcommand{\N}{{\mathbb{N}}}
\newcommand{\diam}{\mathrm{diam}}%changed
\newcommand{\vol}{\mathrm{Vol}}
\newcommand{\Var}{\mathrm{Var}}
\newcommand{\var}{\mathrm{var}}
\newcommand{\infldim}{\mathrm{infldim}}
\newcommand{\Area}{\mathrm{Area}}
\newcommand{\Vol}{\mathrm{Vol}}
\begin{document}

\title{First-eigenvalue maximization and inflation of maps}

\author{Shin Nayatani}
\address{Graduate School of Mathematics, Nagoya University, Chikusa-ku, Nagoya 464-8602, Japan}
\email{nayatani@math.nagoya-u.ac.jp}
%\thanks{Partly supported by the Grant-in-Aid
%for Scientific Research, The Ministry of Education,
%Science, Sports and Culture, Japan.}
%\subjclass[2010]{Primary~20F65; Secondary~58E20, 20P05.}
%\keywords{finitely generated group, random group, $\cat$ space, fixed-point property,
%energy of map, Wang invariant, expander, Euclidean building}

%\date{}

\maketitle

\begin{abstract}
Given a compact manifold equipped with a volume element and a Riemannian metric, we formulate
and study a dual pair of optimization problems: one concerning $C^\infty$-maps from the manifold
into the Hilbert space $l^2$ and the other concerning the smallest positive eigenvalue of the
Bakry-\'Emery Laplacian.
We present examples of manifolds for which these problems can be solved explicitly.
We also prove a Nadirashvili-type theorem.
\end{abstract}

\section*{Introduction}
In this paper, given a compact manifold on which a volume element and a Riemannian metric are specified, we formulate and study an optimization problem concerning $C^\infty$-maps from the manifold into $l^2$ and a maximization problem, dual to the former one in an appropriate sense, for the smallest positive eigenvalue of a certain type of Laplacian.
Here, $l^2$ denotes the Hilbert space consisting of square-summable sequences of real numbers.
Throughout this paper, we assume that a manifold is connected and has no boundary.

The celebrated Nash isometric embedding theorem \cite{Nash} states that any compact Riemannian manifold $(M, h)$ of dimension $n$ can be isometrically embedded into a Euclidean space $\R^N$, where $N=n(3n+11)/2$.
If $N$ is allowed to be larger, the image of the Nash embedding $\varphi\colon M\to \R^N$ can be put in an arbitrarily small ball.
On the other hand, the image cannot be too large.
In fact, we have the obvious bound $\diam \varphi(M)\leq \diam (M, h)$, where the left-hand side is the extrinsic diameter of $\varphi(M)$ in $\R^N$ and the right-hand side is the intrinsic diameter of the Riemannian manifold $(M, h)$.
One is therefore led to the problem of searching for an isometric embedding with the largest image in an appropriate sense.
Such an embedding, if exists, may be regarded as a standard isometric embedding.

We propose one possible definition of a map with the largest image, by considering an analogue for a manifold of the problem formulation for a graph introduced by G\"oring-Helmberg-Wappler \cite{GoeringHelmbergWappler1, GoeringHelmbergWappler2}.
Thus, when a volume element $d\mu$ and a Riemannian metric $h$ are given, we consider the optimization problem (Problem \ref{problem-emb}) of maximizing the variance $\var(\varphi) = \int_M\|\varphi\|^2\, d\mu_1$, where $d\mu_1=d\mu/\Vol(d\mu)$, over all $C^\infty$-maps $\varphi\colon M\to l^2$ satisfying the constraint
\begin{equation}\label{emb-constraint}
\varphi^* h_{l^2}\leq h,\quad \int_M\varphi\, d\mu_1=0,
\end{equation}
where $h_{l^2}$ is the standard Riemannian metric of $l^2$.
We call a map attaining the supremum of variance, $\Var(d\mu,h)$, an {\em inflated map} if exists.
A map satisfying the former condition in \eqref{emb-constraint} is named a {\em short} map by Nash.
Note that shortness is equivalent to being $1$-Lipschitz within the category of smooth maps.
On the other hand, the functional $\var(\varphi)$ measures globally how the map expands in the space $l^2$.
Therefore, an inflated map is a globally most expanding map among all locally shrinking (non-expanding) maps into $l^2$.
In view of examples in Section 3, an inflated map, if exists, is not necessarily an isometric immersion.
We, however, believe that an inflated map is an isometric immersion for a sufficiently large class of pairs $(d\mu, h)$.

The study of the maximization of the smallest positive eigenvalue of the Laplacian over all Riemannian metrics (with volume normalized) on a compact manifold began with the seminal work of Hersch \cite{Hersch}.
He proved that for any Riemannian metric on the two-sphere $S^2$, the inequality $\lambda_1(g)\cdot \Area(g)\leq 8\pi$ holds, where $\lambda_1(g)$ denotes the smallest positive eigenvalue, called the first eigenvalue hereafter, of the Laplacian with respect to $g$ and $\Area(g)$ denotes the area of $g$.
Subsequently, Berger \cite{Berger} proposed the problem whether for any compact manifold $M$, the scale-invariant functional $g\mapsto \lambda_1(g)\cdot \Vol(g)^{2/n}$, which we call the {\em Berger functional}, defined on all Riemannian metrics is bounded from above or not.
Here, $\Vol(g)$ denotes the volume of $g$ and $n$ is the dimension of $M$.
Urakawa \cite{Urakawa} then gave the first negative answer to this problem by computing the first eigenvalue of the so-called Berger metrics on $S^3$.
These metrics are obtained by rescaling the standard metric by a fixed ratio in the direction of the fibers of the Hopf map $S^3\to S^2$.
Urakawa showed that the Berger functional diverges to infinity as this ratio tends to infinity.
On the other hand, Yang-Yau \cite{YangYau} proved that the inequality $\lambda_1(g)\cdot \Area(g)\leq 8\pi(\gamma+1)$ holds for any Riemannian metric $g$ on the compact surface of genus $\gamma$.
(El Soufi-Ilias \cite{ElSoufiIlias} noted that the constant $\gamma+1$ in this inequality can be improved to $\bigl[\frac{\gamma+3}{2}\bigr]$.)
Therefore, the Berger problem was affirmatively solved for compact surfaces.
After the work of Yang-Yau, the problem of finding the supremum of the Berger functional on a compact surface of fixed genus has been actively studied.
See \cite{Nadirashvili, JLNNP, NayataniShoda, Ros, KarpukhinVinokurov} for developments in this direction.
For general genera, the existence of a metric (possibly with conical singularities) that attains the supremum of the Berger functional was finally established by Matthiesen-Siffert \cite{MatthiesenSiffert}, following the crutial contribution by Petrides \cite{Petrides}.
Meanwhile, Nadirashvili \cite{Nadirashvili} proved that if a Riemannian metric $g$ attains the supremum of the Berger functional on a compact surface, then there exists a map consisting of first eigenfunctions of the Laplacian $-\Delta_g$, which gives an isometric minimal immersion into a sphere.
This is a beautiful theorem that bridges the eigenvalue maximization problem and minimal surface theory, which are seemingly unrelated.

We propose a new first-eigenvalue maximization problem.
Again, we consider the setting in which a volume element $d\mu$ and a Riemannian metric $h$ are given on a compact manifold $M$.
Then we consider the problem (Problem \ref{problem-spec}) of maximizing the first eigenvalue
$\lambda_1(d\mu,g)$ of the Bakry-\'Emery Laplacian
$$
-\Delta_{(d\mu,g)} = -\Delta_g + g(df, d\bullet),\quad d\mu_g = e^f\, d\mu,
$$
where $d\mu_g$ is the volume element of $g$, over all Riemannian metrics $g$ satisfying the constraint
\begin{equation}\label{metric-normalization-intro}
\int_M (g, h)\, d\mu_1 =1,\quad (g,h) = g^{ij}h_{ij},
\end{equation}
and denote the supremum of $\lambda_1(d\mu,g)$ by $\Lambda(d\mu,h)$.
This problem is, in fact, equivalent to the Lagrangian dual problem of the previously described optimization problem (the primal problem) concerning maps.
As a consequence of duality, we obtain the weak duality inequality $\Var(d\mu,h)\leq \frac{1}{\Lambda(d\mu,h)}$
between the optimal values of the primal and dual problems.

While the idea of Lagrangian duality is useful for identifying the dual problem, the weak duality inequality itself follows immediately from the relation $\var(\varphi)\leq \frac{1}{\lambda_1(d\mu,g)}$ between the functionals, where $\varphi$ and $g$ satisfy the constraints \eqref{emb-constraint} and \eqref{metric-normalization-intro}, respectively.
Though the proof of this inequality is easy, it should be noted that the former constraint in \eqref{emb-constraint} is crutial for relating two problems involving different variables.
The equality condition is that $\varphi$ and $g$ satisfy
\begin{equation}\label{condition-equality-intro}
-\Delta_{(d\mu,g)} \varphi = \lambda_1(d\mu,g) \varphi,\quad \varphi^* h_{l^2} = h.
\end{equation}
When we specified $d\mu=d\mu_h$ initially, the necessary and sufficient condition for the equality to hold with the choice of $g=h$ reduces to the Takahashi condition
$$
-\Delta_h \varphi = \lambda_1(h) \varphi,\quad \varphi^* h_{l^2} = h.
$$
According to the Takahashi theorem \cite{Takahashi}, this condition is equivalent to the statement that $\varphi$ is an isometric minimal immersion into a sphere of some radius
by first eigenfunctions of the Laplacian $-\Delta_h$.
In particular, it follows that such an immersion is an inflated map.  
Many examples of compact Riemannian manifolds $(M,h)$ which admit an isometric minimal immersion into a sphere by first eigenfunctions are known, and for those examples, the primal and dual problems with respect to the choice $d\mu=d\mu_h$ are solved in a trivial manner.

The simplest nontrivial examples are flat tori.
For the two-dimensional torus $T^2$, we can solve the primal and dual problems associated with the pair $(d\mu_h, h)$, where $h$ is any flat metric on $T^2$ and $d\mu_h$ is its volume element.
In fact, the flat metric $h_{\mathrm{EL}}$ corresponding to the equilateral lattice $\Z(1,0)\oplus \Z(1/2, \sqrt{3}/2)$ gives a {\em common} solution to all these first-eigenvalue maximization problems, and maps consisting of first eigenfunctions of the Laplacian with respect to $h_{\mathrm{EL}}$ give inflated isometric embeddings into $\R^6$.

The Berger spheres mentioned above provide interesting three-dimensional examples.
Let $h_t$ denote the Berger metric obtained by rescaling the standard metric of $S^3$ by a factor of $t^2$ in the direction of the Hopf fibers.
In the already mentioned work \cite{Urakawa}, Urakawa observed that the multiplicity of the first eigenvalue of $h_t$ becomes $7$ at $t=1/\sqrt{6}$, where two different eigenspaces meet.
The solutions of the primal and dual problems associated with the pair $(d\mu_{h_t}, h_t)$, consisting of the Berger metric $h_t$ and its volume element, exhibit rather different behavior depending on whether $t\leq 1$ or $t\geq 1$.
For $t\leq 1$, the Berger metric $h_{1/\sqrt{6}}$ gives a {\em common} solution to all the first-eigenvalue maximization problems with $t\leq 1$, and maps consisting of first eigenfunctions of the Laplacian with respect to $h_{1/\sqrt{6}}$ give inflated isometric embeddings into $\R^7$.
On the other hand, for $t\geq 1$, at least among left-invariant metrics on $S^3=\mathrm{SU}(2)$, optimal solutions to the first-eigenvalue maximization problems cannot be found as Riemannian metrics.
In fact, the Carnot-Carath\'eodory metric $h_\infty=\lim_{t\to \infty} h_t$ gives a common solution to all these problems with $t\geq 1$.
We also obtain inflated maps as maps consisting of first eigenfunctions of $h_\infty$, which is nothing but the standard inclusion map of $S^3$ into $\R^4$, and is therefore not isometric with respect to $h_t$ (unless $t=1$).
Note that this does not contradict the latter equation of \eqref{condition-equality-intro} since $h_\infty$ is not a Riemannian metric.

Not limited to the Berger metrics, we can also solve the primal and dual problems associated with the pair $(d\mu_h, h)$, where $h$ is any left-invariant metric on $\mathrm{SU}(2)$ and $d\mu_h$ is its volume element.
It turns out that if one searches for solutions of the first-eigenvalue maximization problem within left-invariant metrics on $\mathrm{SU}(2)$, the cases in which the solutions are given by Riemannian metrics are limited to the Berger metrics $h_t$ with $t\leq 1$, up to isometry and scaling, and are therefore rather exceptional.

Establishing the existence of solutions of the primal and dual problems associated with a general pair $(d\mu, h)$ is an important problem.
This problem will be addressed in a future work.
On the other hand, it can be proved that if the first-eigenvalue maximization problem is solved by a {\em Riemannian} metric, then an inflated isometric immersion is obtained as a map consisting of first eigenfunctions of the corresponding Bakry-\'Emery Laplacian (Theorem \ref{nadirashvili-type-thm}).

G\"oring et al.\ \cite{GoeringHelmbergWappler2} introduced the notion of {\em rotational dimension} for a finite graph in terms of solutions to their graph-embedding problem. We define an analogue of it, called the {\em inflation dimension}, for a manifold by employing inflated maps.

\medskip
This paper is organized as follows.
In Section 1, we prepare basic materials on the Bakry-\'Emery Laplacian and symmetric $2$-tensor fields.
In Section 2, for a compact manifold $M$ equipped with a pair $(d\mu, h)$, we formulate an optimization problem concerning maps into $l^2$.
As its Lagrangian dual, we derive the problem of maximizing the first eigenvalue of the Bakry-\'Emery Laplacian.
We observe that the weak duality inequality follows as a consequence of the duality.
We also derive a universal inequality between the two functionals of the respective problems, from which the weak duality inequality can be deduced again.
We write down the equality conditions for this inequality.
We then observe that an isometric minimal immersion into a sphere by first eigenfunctions of the Laplacian is an inflated map.
In Section 3, we present examples of manifolds for which the primal and dual problems introduced in Section 2 can be solved explicitly.
We solve the problems for all flat metrics on the two-dimensional torus and all left-invariant metrics on $\mathrm{SU}(2)$, and in particular, give inflated maps for them.
In Section 4, we prove a Nadirashvili-type theorem when the first-eigenvalue maximization problem is solved by a Riemannian metric.
In Section 5, we define the inflation dimension and give some examples.

\medskip\noindent
{\bf Convention.}\quad A Riemannian metric is primarily defined on the tangent bundle.
In this paper, however, we often regard a Riemannian metric $g$ as being defined on the cotangent bundle.
When doing so, we denote it by $g^*$ in order to avoid confusion.
We also allow $g^*$ to stand for a positive semidefinite metric on the cotangent bundle, in which case $g$ may be defined only partially on the tangent bundle.

\section{Preliminaries}

\subsection{Bakry-\'Emery Laplacian}

Let $M$ be a compact manifold of dimension $n$ equipped with a smooth
volume element $d\mu$ and a Riemannian metric $g$.
Let $\delta_{(d\mu,g)}$ denote the codifferential with respect to $(d\mu,g)$,
defined on one-forms by
$$
\int_M (\delta_{(d\mu,g)} \alpha) u\, d\mu = - \int_M g^*(\alpha, du)\, d\mu,\quad
\alpha\in \Omega^1(M),\,\, u\in C^\infty(M).
$$
The {\em Bakry-\'Emery Laplacian} $\Delta_{(d\mu,g)}\colon C^\infty(M)\to C^\infty(M)$
is defined by
$$
\Delta_{(d\mu,g)} u = \delta_{(d\mu,g)} du,\quad u\in C^\infty(M).
$$
If we write $d\mu_g = e^f d\mu$, where $d\mu_g$ is the Riemannian volume element of $g$,
then
$$
\delta_{(d\mu,g)} \alpha = \delta_g \alpha - g^*(df, \alpha),
$$
where $\delta_g$ is the codifferential with respect to $g$.
It follows that
$$
\Delta_{(d\mu,g)} u = \Delta_g u - g^*(df,du),\quad u\in C^\infty(M),   
$$
where $\Delta_g\colon u\mapsto \delta_g du = g^{ij} (\nabla_{\partial_i}
du)(\partial_j)$ is the Riemannian Laplacian with respect to $g$.
In particular, if $d\mu = c\, d\mu_g$, where $c$ is a positive constant, then
$\Delta_{(d\mu,g)} = \Delta_g$.

Let $\lambda_1(d\mu,g)$ denote the first nonzero eigenvalue of $-\Delta_{(d\mu,g)}$.
It is characterized variationally as
\begin{equation}\label{variationalcharacterization}
\lambda_1(d\mu,g) = \inf \frac{\int_M |du|^2_g\, d\mu}{\int_M u^2\, d\mu},
\end{equation}
where $|du|^2_g = g^*(du, du)$ and $\inf$ is taken over all nonzero $C^1$-functions
$u$ satisfying $\int_M u\, d\mu=0$.
The expression $\mathrm{RQ}(u):= \int_M |du|^2_g\, d\mu / \int_M u^2\, d\mu$
is referred to as the {\em Raileigh quotient}.

Note that the operators $\delta_{(d\mu,g)}$ and $\Delta_{(d\mu,g)}$ are defined,
and the quantity $\lambda_1(d\mu,g)$ is defined and nonnegative, if $g^*$ is only positive
semidefinite on $T^*M$.
Let $\mathcal{M}^*$ denote the space of all positive semidefinite metrics on $T^*M$.
Then the correspondence $g^*\in \mathcal{M}^*\mapsto \delta_{(d\mu,g)}$, and hence
$g^*\in \mathcal{M}^*\mapsto \Delta_{(d\mu,g)}$, is linear.
It follows from \eqref{variationalcharacterization} that the function $g^*\in \mathcal{M}^*
\mapsto \lambda_1(d\mu,g)\in \R$ is concave.
Therefore, the set $\{ g^*\in \mathcal{M}^* \mid \lambda_1(d\mu,g)\geq c \}$ is convex
for any positive number $c$.

Henceforth, we exclusively use $g^*$ instead of $g$ to indicate the dependence on the metric.
For example, we denote $\Delta_{(d\mu,g)}$ and $\lambda_1(d\mu,g)$ by
$\Delta_{(d\mu,g^*)}$ and $\lambda_1(d\mu,g^*)$, respectively.

\subsection{Symmetric $2$-tensors}
Set $S := S^2 T^*M$ (resp.\! $S^*:=S^2 TM$), the bundle of symmetric covariant
(resp.\! contravariant) $2$-tensors.
Let $\mathcal{S}$ (resp.\! $\mathcal{S}^*$) denote the space of $C^\infty$-sections
of $S$ (resp.\! $S^*$).
Note that $\mathcal{M}^*\subset \mathcal{S}^*$.
We denote the (pointwise) pairing between $g^*\in \mathcal{S^*}$ and $h\in \mathcal{S}$
by $( g^*, h )$.
In local coordinates, $( g^*, h ) = g^{ij} h_{ij}$.
We will use the following fact.

\begin{Lemma}\label{positive-semidefinite}
Suppose that $M$ is equipped with a smooth volume element $d\mu$.
Then $h\in \mathcal{S}$ is positive semidefinite at every point of $M$ if and only if
$$
\int_M ( g^*, h )\, d\mu\geq 0
$$
holds for all $g^*\in \mathcal{M}^*$.
\end{Lemma}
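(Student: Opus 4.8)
The plan is to prove the two implications separately: the forward direction by a pointwise algebraic observation, and the reverse direction by a localization argument using bump functions.

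First I would establish that if $h$ is positive semidefinite at every point, then the integrand $( g^*, h )$ is pointwise nonnegative for every $g^*\in \mathcal{M}^*$, which immediately gives $\int_M ( g^*, h )\, d\mu\geq 0$. At a fixed point $p$, a positive semidefinite $g^*_p\in S^2 T_pM$ can be written as $g^*_p = \sum_a v_a\otimes v_a$ for finitely many $v_a\in T_pM$ (diagonalize $g^*_p$ with respect to an auxiliary inner product on $T_p^*M$). Then $( g^*_p, h_p ) = \sum_a h_p(v_a, v_a)\geq 0$ because $h_p$ is positive semidefinite. Since $d\mu$ is a (nonnegative) smooth volume element, integrating over $M$ preserves the inequality.

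For the converse I would argue by contraposition. Suppose $h$ is not positive semidefinite at some point $p$, so there exists $v\in T_pM$ with $h_p(v,v)<0$. Extend $v$ to a smooth vector field $V$ on $M$ (using a chart and a cutoff), and by continuity of $x\mapsto h_x(V_x,V_x)$ choose a coordinate neighborhood $U$ of $p$ on which $h(V,V)<0$. Pick $\chi\in C^\infty(M)$ with $\chi\geq 0$, $\operatorname{supp}\chi\subset U$, and $\chi(p)>0$, and set $g^* := \chi\, V\otimes V\in \mathcal{S}^*$. This $g^*$ lies in $\mathcal{M}^*$, since for any covector $\alpha$ one has $g^*_x(\alpha,\alpha) = \chi(x)\,\alpha(V_x)^2\geq 0$, and
$$
\int_M ( g^*, h )\, d\mu = \int_U \chi\, h(V,V)\, d\mu < 0,
$$
the last inequality being strict because $\chi\, h(V,V)$ is continuous, $\leq 0$ on $U$, and $<0$ on the nonempty open set $\{\chi>0\}$. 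This contradicts the hypothesis, so $h$ must be positive semidefinite everywhere.

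I do not anticipate a real obstacle here: the only point needing a little care is producing a genuine element of $\mathcal{M}^*$ — a globally defined, smooth, positive semidefinite contravariant $2$-tensor — that is supported near $p$ and detects the negativity of $h$, and the rank-one construction $\chi\, V\otimes V$ does exactly this. The role of the smooth volume element $d\mu$ is simply to guarantee that the localized integral is strictly negative rather than merely nonpositive.
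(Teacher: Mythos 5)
Your proof is correct, and since the paper states this lemma without proof (introducing it only as a ``fact''), your argument supplies exactly the standard reasoning it implicitly relies on: pointwise nonnegativity of the pairing via the rank-one decomposition of a positive semidefinite $g^*_p$ for the forward direction, and a localized rank-one tensor $\chi\, V\otimes V$ to detect a failure of positive semidefiniteness for the converse. Both steps are sound, and you correctly identify the only delicate point, namely producing a globally defined smooth element of $\mathcal{M}^*$ supported near the bad point.
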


\section{Problems}

\subsection{Inflated map}
Let $M$ be a compact manifold of dimension $n$ equipped with a smooth volume
element $d\mu$ and a Riemannian metric $h$. E.g., $d\mu = d\mu_h$,
the volume element of $h$.
Write $\vol(d\mu) = \int_M d\mu$ and $d\mu_1=d\mu/\vol(d\mu)$.
Let $l^2$ denote the Hilbert space of real sequences $\{ a_k \}_{k=1}^\infty$
such that $\sum_{k=1}^\infty {a_k}^2<\infty$.
Let $h_{l^2}$ denote the standard Riemannian metric of $l^2$.

\begin{Problem}\label{problem-emb}
Over all $C^\infty$-maps $\varphi\colon M \to l^2$ satisfying the constraints
\begin{equation}\label{short}
\varphi^* h_{l^2}\leq h
\end{equation}
and
\begin{equation}\label{center}
\int_M \varphi\, d\mu_1 = 0,
\end{equation}
maximize the variance of $\varphi$,
\begin{equation*}\label{variance}
\var(\varphi) := \int_M \|\varphi\|^2\, d\mu_1,
\end{equation*}
where $\|\cdot\|$ is the norm of $l^2$.
\end{Problem}

Note that the zero-average condition \eqref{center} is not restrictive
since it can be achieved by composing a translation in $l^2$ to any given map.
Set
$$
\Var(d\mu,h) := \sup \var(\varphi),
$$
where $\sup$ is taken over all $C^\infty$-maps $\varphi$ satisfying \eqref{short} 
and \eqref{center}.

\begin{Definition}
A solution to Problem \ref{problem-emb} is called an {\em inflated map}.
\end{Definition}

A $C^\infty$-map $\varphi\colon M\to l^2$ satisfying \eqref{short} is called {\em short}.
A short map is $1$-Lipschitz, and vice versa for a smooth map.
An inflated map, if exists, is a globally most expanding map among all locally shrinking
(non-expanding) maps.
It is, therefore, reasonable to expect that an inflated map would be an isometric immersion.
However, as we will observe in Section \ref{examples}, this is not the case in general.

\begin{Remark}\label{problem-emb-RN}
If we restrict maps $\varphi$ to those into $\R^N$, where $N$ is a fixed positive integer,
we denote the corresponding supremum of the functional $\var$ by $\Var^{(N)}(d\mu,h)$.
If $N=1$, $\Var^{(1)}(d\mu,h)$ coincides with (one half of) the {\em observable variance}
$\mathrm{ObsVar}(X)$ of the metric measure space $X=(M, d\mu, d_h)$ introduced by
Nakajima-Shioya \cite{NakajimaShioya}.
They showed, using the Ascoli-Arzel\`a theorem, that there exists a $1$-Lipschitz function
which attains $\mathrm{ObsVar}(X)$.
For general $N$, the same argument shows that there exists a $1$-Lipschitz map
$\varphi\colon M\to \R^N$ which attains $\Var^{(N)}(d\mu,h)$.
Note that such a map $\varphi$ may not be smooth.
\end{Remark}

\subsection{Dual problem}\label{ss-dual-problem}
Let $\mathcal{F}$ denote the set of all $C^\infty$-maps from $M$ into $l^2$
with average equal to the zero vector.
Define a function $L\colon \mathcal{F}\times \mathcal{M}^*\to \R$ by
\begin{equation*}\label{lagrange}
L(\varphi, g^*) := \int_M \| \varphi \|^2\, d\mu_1
+ \int_M ( g^*, h - \varphi^*h_{l^2} )\, d\mu_1.
\end{equation*}
Note the obvious inequality:
\begin{equation}\label{inf-sup}
\sup_{\varphi\in \mathcal{F}} \inf_{g^*\in \mathcal{M}^*} L(\varphi, g^*)\leq \inf_{g^*\in
\mathcal{M}^*}
\sup_{\varphi\in \mathcal{F}} L(\varphi, g^*).
\end{equation}

Fix $\varphi\in \mathcal{F}$.
By Lemma \ref{positive-semidefinite}, $\varphi$ satisfies the constraint \eqref{short}
if and only if
$$
\int_M ( g^*, h - \varphi^*h_{l^2} )\, d\mu_1\geq 0
$$
holds for all $g^*\in \mathcal{M}^*$.
Hence, one has
$$
\inf_{g^*\in \mathcal{M}^*} L(\varphi, g^*) =
\left\{
\begin{array}{cl}
\int_M \| \varphi  \|^2\, d\mu_1 & \mbox{if $\varphi$ satisfies
\eqref{short}},\\
- \infty & \mbox{otherwise}.
\end{array}
\right.
$$
Therefore, the problem on the left-hand side of \eqref{inf-sup} is identical to Problem
\ref{problem-emb}.
In other words, the function $L$ is the Lagrange function associated to Problem \ref{problem-emb}
with $g$ playing the role of Lagrange multiplier.

We shall identify the problem on the right-hand side of \eqref{inf-sup}; It is the dual
problem of Problem \ref{problem-emb} with respect to the Lagrange function $L$.
By noting the equality $( g^*, \varphi^*h_{l^2} ) = \|d\varphi\|^2_{g^*}$,
the function $L$ is rearranged as
$$
L(\varphi, g^*) = \int_M ( g^*, h )\, d\mu_1
+ \left( - \int_M \|d\varphi\|^2_{g^*}\, d\mu_1 + \int_M \| \varphi \|^2\,
d\mu_1 \right).
$$
Fix $g^*\in \mathcal{M}^*$.
Since $\lambda_1(d\mu,g^*)\geq 1$ if and only if
$$
- \int_M \|d\varphi\|^2_{g^*}\, d\mu_1 + \int_M \| \varphi \|^2\,
d\mu_1\leq 0
$$
holds for all $\varphi\in \mathcal{F}$, we obtain
$$
\sup_{\varphi\in \mathcal{F}} L(\varphi,g^*) =
\left\{
\begin{array}{cl}
\int_M ( g^*, h )\, d\mu_1 & \mbox{if $\lambda_1(d\mu,g^*)\geq 1$},\\
\infty & \mbox{otherwise}.
\end{array}
\right.
$$
Therefore, the problem on the right-hand side of \eqref{inf-sup} coincides with the following

\begin{Problem}\label{problem-dual}
Over all $g^*\in \mathcal{M}^*$ satisfying the constraint
\begin{equation}\label{bottom}
\lambda_1(d\mu,g^*)\geq 1,
\end{equation}
minimize
$\int_M ( g^*, h )\, d\mu_1$.
\end{Problem}

As easily seen, the problem remains the same if one replaces the constraint \eqref{bottom}
by the equality $\lambda_1(d\mu,g^*)= 1$.
Thus the problem is equivalent to that of minimizing
\begin{equation}\label{scale-invariant-functional}
\int_M ( g^*, h )\,d\mu_1/\lambda_1(d\mu,g^*)
\end{equation}
(which is invariant under a scaling of $g^*$), or maximizing $\lambda_1(d\mu,g^*)/\int_M ( g^*, h )\, d\mu_1$,
over all $g^*\in \mathcal{M}^*$.

In conclusion, Problem \ref{problem-dual} is equivalent to the following {\em first-eigenvalue
maximization problem}:

\begin{Problem}\label{problem-spec}
Over all $g^*\in \mathcal{M}^*$, maximize  
$$
\lambda_1(d\mu,g^*) \Bigm/ \int_M ( g^*, h )\, d\mu_1.
$$
\end{Problem}

Set
$$
\Lambda_1(d\mu,h) := \sup_{g^*\in \mathcal{M}^*} \lambda_1(d\mu,g^*) \Bigm/
\int_M ( g^*, h )\, d\mu_1.
$$
Clearly,  
$$
\Lambda_1(d\mu,h) = \sup \lambda_1(d\mu,g^*),
$$
where $\sup$ is taken over all $g^*\in \mathcal{M}^*$ satisfying the constraint
\begin{equation*}\label{metric-normalization1}
\int_M ( g^*, h )\, d\mu_1 = 1,
\end{equation*}
or
\begin{equation*}\label{metric-normalization2}
\int_M ( g^*, h )\, d\mu_1\leq 1.
\end{equation*}

As an immediate consequence of the inequality \eqref{inf-sup}, we obtain the {\em weak-duality
inequality}:
\begin{equation}\label{weak-duality}
\Var(d\mu,h)\leq \frac{1}{\Lambda_1(d\mu,h)}.
\end{equation}

\begin{Remark}\label{remark-scaling}
It is manifest that the quantities $\Var(d\mu,h)$ and $\Lambda_1(d\mu,h)$ are invariant under
a scaling of $d\mu$.
On the other hand, for a scaling of $h$, they behave as
$$
\Var(d\mu,ch) = c\, \Var(d\mu,h),\quad \Lambda_1(d\mu,ch) = \frac{1}{c}\, \Lambda_1(d\mu,h),
\quad c>0.
$$
Therefore, the product $\Var(d\mu,h)\cdot \Lambda_1(d\mu,h)$ is invariant under a scaling of $h$.
The inequality \eqref{weak-duality} states that this scale-invariant quantity is less that or equal to $1$.
\end{Remark}

Problem \ref{problem-dual}, which is equivalent to Problem \ref{problem-spec},
is to minimize the linear functional $g^*\mapsto \int_M ( g^*, h )\, d\mu_1$
over the convex set consisting of all $g^*\in \mathcal{M}^*$ such that
$\lambda_1(d\mu,g^*)\geq 1$.
This convexity, combined with an averaging argument, implies the following

\begin{Proposition}\label{invariant-solution}
Let $M$ be a compact manifold equipped with a smooth volume element $d\mu$
and a Riemannian metric $h$.
Let $G$ be the automorphism group of $(M,d\mu,h)$, that is, the set of all diffeomorphisms
of $M$ which preserve $d\mu$ and $h$.
If there exists a solution to Problem \ref{problem-spec}, then there also exists a $G$-invariant
solution to Problem \ref{problem-spec}.
\end{Proposition}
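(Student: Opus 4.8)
The plan is to exploit the convexity noted just before the statement: the admissible set $\mathcal{K}:=\{g^*\in\mathcal{M}^*\mid \lambda_1(d\mu,g^*)\ge 1\}$ is convex (being a superlevel set of the concave function $g^*\mapsto\lambda_1(d\mu,g^*)$ on the convex cone $\mathcal{M}^*$), and the functional $g^*\mapsto\int_M(g^*,h)\,d\mu_1$ to be minimized is linear. Given a solution $g_0^*$ to Problem \ref{problem-spec}, we produce a $G$-invariant one by averaging $g_0^*$ over the group $G$.

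First I would address the structure of $G$. Since $(M,h)$ is a compact Riemannian manifold, the isometry group of $(M,h)$ is a compact Lie group, and $G$ — the subgroup additionally preserving $d\mu$ — is a closed subgroup, hence itself a compact Lie group. It therefore carries a bi-invariant Haar probability measure $d\gamma$. Next I would define the averaged tensor
$$
\bar g^* := \int_G \psi^* g_0^*\, d\gamma(\psi)\in\mathcal{S}^*,
$$
where $\psi^* g_0^*$ denotes the pullback of the contravariant $2$-tensor $g_0^*$ under the diffeomorphism $\psi$ (equivalently, $(\psi_* g_0)^*$). This integral is well defined: it is an integral of a continuous (indeed smooth) family of smooth sections over a compact group, and evaluated against any test covector pair it is a genuine integral of a continuous function, so $\bar g^*$ is a smooth section; since each $\psi^* g_0^*$ is pointwise positive semidefinite and positive semidefiniteness is preserved under convex combination, $\bar g^*\in\mathcal{M}^*$. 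By construction $\bar g^*$ is $G$-invariant: for $\phi\in G$, $\phi^*\bar g^* = \int_G (\psi\phi)^* g_0^*\,d\gamma(\psi) = \bar g^*$ by invariance of Haar measure.

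Then I would check the two things that make $\bar g^*$ an optimal solution. For feasibility, I use concavity of $\lambda_1(d\mu,\cdot)$ together with the invariance $\lambda_1(d\mu,\psi^* g_0^*)=\lambda_1(d\mu,g_0^*)$ — this last equality holds because $\psi$ preserves $d\mu$, so it conjugates $\Delta_{(d\mu,g_0^*)}$ to $\Delta_{(d\mu,\psi^* g_0^*)}$, hence the spectra coincide — to get, via Jensen's inequality for the concave functional applied to the probability measure $d\gamma$,
$$
\lambda_1(d\mu,\bar g^*)\ \ge\ \int_G \lambda_1(d\mu,\psi^* g_0^*)\, d\gamma(\psi)\ =\ \lambda_1(d\mu,g_0^*)\ \ge\ 1,
$$
so $\bar g^*\in\mathcal{K}$. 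For optimality, I use linearity of the objective together with the $G$-invariance of $h$ and $d\mu_1$: the substitution $x\mapsto\psi(x)$ gives $\int_M(\psi^* g_0^*,h)\,d\mu_1=\int_M(g_0^*,h)\,d\mu_1$ for every $\psi\in G$, whence by Fubini
$$
\int_M (\bar g^*, h)\, d\mu_1\ =\ \int_G\!\left(\int_M (\psi^* g_0^*, h)\, d\mu_1\right) d\gamma(\psi)\ =\ \int_M (g_0^*, h)\, d\mu_1.
$$
Thus $\bar g^*$ is feasible and achieves the same objective value as the optimal $g_0^*$, so it too is a solution to Problem \ref{problem-dual}, equivalently to Problem \ref{problem-spec} (after the harmless renormalization to $\lambda_1(d\mu,\bar g^*)=1$, or directly noting that the ratio functional in Problem \ref{problem-spec} is at least as large for $\bar g^*$ as for $g_0^*$).

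The routine points — measurability and smoothness of the averaged section, positive semidefiniteness, the change-of-variables identities — are standard. The one step deserving genuine care, and the main obstacle, is the Jensen/concavity argument: I must make sure the concavity statement from Section 1, which was phrased for finite convex combinations in $\mathcal{M}^*$, extends to the continuous average $\int_G\psi^* g_0^*\,d\gamma$. This follows from the integral (Jensen) form of the variational characterization \eqref{variationalcharacterization}: for any fixed admissible test function $u$, the Rayleigh quotient $\mathrm{RQ}(u)$ is linear in $g^*$ (the numerator $\int_M|du|^2_{g^*}\,d\mu$ is linear, the denominator is constant), so $\mathrm{RQ}_{\bar g^*}(u)=\int_G \mathrm{RQ}_{\psi^* g_0^*}(u)\,d\gamma(\psi)\ge \inf_G \lambda_1(d\mu,\psi^* g_0^*)=\lambda_1(d\mu,g_0^*)$; taking the infimum over $u$ yields $\lambda_1(d\mu,\bar g^*)\ge\lambda_1(d\mu,g_0^*)$ directly, bypassing any abstract Jensen inequality. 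This makes the feasibility step completely elementary and is the cleanest route to the conclusion.
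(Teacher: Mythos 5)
Your proof is correct and is exactly the argument the paper intends: the proposition is stated there without a written proof, being justified only by the preceding remark that ``this convexity, combined with an averaging argument, implies the following,'' and your Haar-averaging over the compact automorphism group, with feasibility via the linearity of the Rayleigh-quotient numerator in $g^*$ and optimality via the change of variables, fills in precisely that argument.
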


\subsection{Direct route to the weak duality}\label{direct}
While the idea of Lagrangian duality was useful to identify the dual problem of Problem \ref{problem-emb},
the ineqaulity \eqref{weak-duality} can be proved directly without referring to the duality.
In fact, if $\varphi\in \mathcal{F}$ satisfies \eqref{short} and $g^*\in \mathcal{M}^*$,
then by taking the pairing of $g^*$ and the both sides of \eqref{short}, we obtain
$
\| d\varphi \|_{g^*}^2 \leq ( g^*, h ).
$
This and \eqref{variationalcharacterization} imply
$$
\int_M \| \varphi \|^2\, d\mu
\leq \frac{1}{\lambda_1(d\mu,g^*)} \int_M \| d\varphi \|_{g^*}^2\, d\mu
\leq \frac{1}{\lambda_1(d\mu,g^*)} \int_M ( g^*, h )\, d\mu,
$$
from which \eqref{weak-duality} follows.
Both the inequalities become equalities if and only if $\varphi$ consists
of functions minimizing the Rayleigh quotient $\mathrm{RQ}$ over all functions
with mean value zero
and $( g^*, h - \varphi^* h_{l^2} ) \equiv 0$.
Note that the latter condition implies $\varphi^* h_{l^2} \equiv h$, that is,
$\varphi$ is an isometric immersion with respect to $h$, if $g^*$ is positive
definite at every point of $M$.
However, this is not the case in general as mentioned in Introduction.

In summary, we have shown

\begin{Proposition}\label{variance-inequality}
Let $M$ be a compact manifold equipped with a smooth volume element $d\mu$ and a Riemannian metric $h$.
Then, for any $\varphi\in \mathcal{F}$ satisfying \eqref{short} and $g^*\in \mathcal{M}^*$,
we have
\begin{equation}\label{variance-lambda-inequality}
\int_M \| \varphi \|^2\, d\mu_1\leq
\int_M ( g^*, h )\, d\mu_1 \Bigm/ \lambda_1(d\mu,g^*),
\end{equation}
which implies the weak-duality inequality \eqref{weak-duality}.
Equality holds in \eqref{variance-lambda-inequality} if and only if the map
$\varphi$ consists of functions minimizing the Rayleigh quotient $\mathrm{RQ}$
over all functions with mean value zero and
\begin{equation}\label{condition-equality}
( g^*, h - \varphi^* h_{l^2} ) \equiv 0
\end{equation}
holds.
If $g^*$ is positive definite at every point of $M$, then these equality conditions
imply that the map $\varphi$ consists of first eigenfunctions of $-\Delta_{(d\mu,g^*)}$
and $\varphi$ is an isometric immersion with respect to $h$.
\end{Proposition}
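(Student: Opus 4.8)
The plan is to reorganize the two-step argument previewed in Subsection~\ref{direct} into a self-contained proof, paying attention to the componentwise use of \eqref{variationalcharacterization} and to the pointwise sign considerations that govern the equality case. Throughout one may assume $\lambda_1(d\mu,g^*)>0$, since otherwise the right-hand side of \eqref{variance-lambda-inequality} is $+\infty$ and the inequality is vacuous.

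First I would record the pointwise bound $\|d\varphi\|_{g^*}^2\le(g^*,h)$. Writing $\varphi=(\varphi_k)_{k\ge 1}$ in the coordinates of $l^2$, the pullback metric is $\varphi^*h_{l^2}=\sum_k d\varphi_k\otimes d\varphi_k$, so that $(g^*,\varphi^*h_{l^2})=\sum_k g^*(d\varphi_k,d\varphi_k)=\|d\varphi\|_{g^*}^2$; pairing the positive semidefinite $g^*\in\mathcal{M}^*$ against the positive semidefinite tensor $h-\varphi^*h_{l^2}$ --- which is exactly where the shortness constraint \eqref{short} enters --- yields $(g^*,h-\varphi^*h_{l^2})\ge 0$ pointwise, i.e.\ the claimed bound. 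Next, the centering condition \eqref{center} makes every component $\varphi_k$ have mean value zero with respect to $d\mu$, so \eqref{variationalcharacterization} gives $\lambda_1(d\mu,g^*)\int_M\varphi_k^2\,d\mu\le\int_M|d\varphi_k|_{g^*}^2\,d\mu$ for each $k$. Summing over $k$ (all sums converge, since $\sum_k|d\varphi_k|_{g^*}^2=\|d\varphi\|_{g^*}^2\le(g^*,h)$ is integrable on the compact $M$) gives $\lambda_1(d\mu,g^*)\int_M\|\varphi\|^2\,d\mu\le\int_M\|d\varphi\|_{g^*}^2\,d\mu$, and combining this with the integrated pointwise bound and dividing by $\vol(d\mu)$ produces \eqref{variance-lambda-inequality}. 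Taking the supremum over admissible $\varphi$ on the left and the infimum over $g^*$ normalized by $\int_M(g^*,h)\,d\mu_1=1$ on the right, and recalling the definitions of $\Var(d\mu,h)$ and $\Lambda_1(d\mu,h)$, gives \eqref{weak-duality}.

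For the equality statement I would analyze the two inequalities separately. Equality in the summed variational estimate holds precisely when it holds for each $k$, i.e.\ when every nonzero component $\varphi_k$ realizes the infimum in \eqref{variationalcharacterization}; this is the assertion that $\varphi$ consists of functions minimizing $\mathrm{RQ}$ among mean-zero functions. Equality in the integrated pointwise bound means $\int_M(g^*,h-\varphi^*h_{l^2})\,d\mu=0$, and since the integrand is pointwise nonnegative by the first step, this forces $(g^*,h-\varphi^*h_{l^2})\equiv 0$, which is \eqref{condition-equality}. When $g^*$ is positive definite everywhere, a mean-zero minimizer of $\mathrm{RQ}$ satisfies the Euler--Lagrange equation $-\Delta_{(d\mu,g^*)}\varphi_k=\lambda_1(d\mu,g^*)\varphi_k$ (the multiplier for the mean-zero constraint dropping out because $\int_M\Delta_{(d\mu,g^*)}u\,d\mu=0$), so $\varphi$ consists of first eigenfunctions; and $(g^*,h-\varphi^*h_{l^2})\equiv 0$ with $g^*$ positive definite and $h-\varphi^*h_{l^2}$ positive semidefinite forces $h-\varphi^*h_{l^2}\equiv 0$, i.e.\ $\varphi^*h_{l^2}=h$, so $\varphi$ is an isometric immersion with respect to $h$.

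I expect no serious obstacle: the proposition is a bookkeeping consequence of Lemma~\ref{positive-semidefinite} and \eqref{variationalcharacterization}. The only points demanding slight care are the phrasing of the equality case for the variational step, where some components $\varphi_k$ are allowed to vanish so the condition must read ``every nonzero component is a minimizer'', and the passage from $\int_M(g^*,h-\varphi^*h_{l^2})\,d\mu=0$ to the pointwise identity $(g^*,h-\varphi^*h_{l^2})\equiv 0$, which rests on the nonnegativity of the integrand established in the first step rather than on any regularity input.
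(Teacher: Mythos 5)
Your proposal is correct and follows essentially the same route as the paper's own argument in Subsection~\ref{direct}: the pointwise bound $\|d\varphi\|_{g^*}^2\le(g^*,h)$ obtained by pairing $g^*$ with \eqref{short}, followed by the componentwise use of the variational characterization \eqref{variationalcharacterization}, with the equality case split between the two inequalities exactly as in the text. The extra care you take (convergence of the sums, the ``nonzero component'' caveat, and deducing the pointwise identity \eqref{condition-equality} from the vanishing integral of a nonnegative integrand) only makes explicit what the paper leaves implicit.
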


Note that in the last statement of the proposition, the image of $\varphi$ lies in a finite
dimensional subspace of $l^2$ since the multiplicity of the first eigenvalue is finite.

\medskip
Suppose $d\mu=d\mu_h$ is chosen initially.
Then the equality holds in \eqref{variance-lambda-inequality} with $g^*=h^*$
if and only if the map $\varphi$ is an $h$-isometric minimal immersion into a
finite-dimensional sphere of radius $\sqrt{n/\lambda_1(h)}$, where $n = \dim M$,
by first eigenfunctions of $-\Delta_h$.
In fact, under $g^*=h^*$, the equality condition for \eqref{variance-lambda-inequality}
reduces to the {\em Takahashi condition}:\,\, the map $\varphi$ is an $h$-isometric immersion
into a Euclidean space by first eigenfunctions of $-\Delta_h$.
Then by the Takahashi theorem \cite{Takahashi}, this condition implies that
the image of $\varphi$ lies in a sphere of radius $\sqrt{n/\lambda_1(h)}$ and $\varphi$
is minimal as an immersion into the sphere.
In particular, we have

\begin{Proposition}\label{minimal-to-sphere}
Let $(M,h)$ be a compact Riemannian manifold and let $\varphi$ be an isometric minimal
immersion of $(M, h)$ into a sphere (of some radius) by first eigenfunctions of $-\Delta_h$.
Then $\varphi$ is an inflated map with respect to $(d\mu_h, h)$, and $g^*=h^*$ provides a solution
to Problem \ref{problem-spec} associated with $(d\mu_h, h)$.
\end{Proposition}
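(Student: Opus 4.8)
The plan is to deduce both assertions from Proposition~\ref{variance-inequality}, applied in its two ``slots'': once with the metric $g^{*}=h^{*}$ held fixed, to show that $\varphi$ beats every competing map, and once with the map $\varphi$ held fixed, to show that $h^{*}$ beats every competing metric. The first step is to unwind the hypothesis via the Takahashi theorem \cite{Takahashi}: the statement that $\varphi$ is an isometric minimal immersion of $(M,h)$ into a sphere by first eigenfunctions of $-\Delta_{h}$ is equivalent to the Takahashi condition, namely that the components $\varphi_{k}$ of $\varphi$ satisfy $-\Delta_{h}\varphi_{k}=\lambda_{1}(h)\varphi_{k}$ for all $k$ and $\varphi^{*}h_{l^{2}}=h$. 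In particular each $\varphi_{k}$ is $L^{2}$-orthogonal to the constants, so $\int_{M}\varphi\,d\mu_{1}=0$ and $\varphi\in\mathcal{F}$, while $\varphi^{*}h_{l^{2}}=h$ shows that \eqref{short} holds with equality. Since we have chosen $d\mu=d\mu_{h}$, one has $\Delta_{(d\mu_{h},h^{*})}=\Delta_{h}$, hence $\lambda_{1}(d\mu_{h},h^{*})=\lambda_{1}(h)$, and the minimizers of the Rayleigh quotient $\mathrm{RQ}$ over mean-zero functions are precisely the first eigenfunctions of $-\Delta_{h}$.

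Next I would verify that the pair $(\varphi,h^{*})$ realizes equality in \eqref{variance-lambda-inequality}. Both equality conditions of Proposition~\ref{variance-inequality} are met: $\varphi$ consists of $\mathrm{RQ}$-minimizers by the previous paragraph, and \eqref{condition-equality} holds because $(h^{*},h-\varphi^{*}h_{l^{2}})=(h^{*},h-h)\equiv 0$. Consequently
\[
\var(\varphi)=\int_{M}(h^{*},h)\,d\mu_{1}\Big/\lambda_{1}(d\mu_{h},h^{*})=\frac{n}{\lambda_{1}(h)},
\]
where we used $(h^{*},h)=h^{ij}h_{ij}=n$ pointwise together with $\lambda_{1}(d\mu_{h},h^{*})=\lambda_{1}(h)$.

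To finish, take any $\psi\in\mathcal{F}$ satisfying \eqref{short}; applying \eqref{variance-lambda-inequality} to $\psi$ with $g^{*}=h^{*}$ gives $\var(\psi)\le n/\lambda_{1}(h)=\var(\varphi)$, so $\varphi$ attains the supremum $\Var(d\mu_{h},h)$ and is therefore an inflated map (with $\Var(d\mu_{h},h)=n/\lambda_{1}(h)$). Dually, take any $g^{*}\in\mathcal{M}^{*}$; applying \eqref{variance-lambda-inequality} to the fixed map $\varphi$ with this $g^{*}$ gives $n/\lambda_{1}(h)=\var(\varphi)\le\int_{M}(g^{*},h)\,d\mu_{1}/\lambda_{1}(d\mu_{h},g^{*})$, i.e.\ $\lambda_{1}(d\mu_{h},g^{*})/\int_{M}(g^{*},h)\,d\mu_{1}\le\lambda_{1}(h)/n$, with equality at $g^{*}=h^{*}$; hence $g^{*}=h^{*}$ solves Problem~\ref{problem-spec} (and $\Lambda_{1}(d\mu_{h},h)=\lambda_{1}(h)/n$). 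I do not expect a serious obstacle here beyond bookkeeping; the only point that genuinely requires care is matching the Takahashi condition with the two equality conditions of Proposition~\ref{variance-inequality} and recording the identifications $\lambda_{1}(d\mu_{h},h^{*})=\lambda_{1}(h)$ and $(h^{*},h)\equiv n$, after which optimality of both $\varphi$ and $h^{*}$ is immediate from the inequality read in either variable.
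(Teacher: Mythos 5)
Your proposal is correct and follows essentially the same route as the paper: the hypothesis (via the Takahashi condition) is exactly the equality case of Proposition~\ref{variance-inequality} for the pair $(\varphi,h^{*})$ with $d\mu=d\mu_{h}$, and reading the universal inequality \eqref{variance-lambda-inequality} in each variable separately then yields optimality of both $\varphi$ and $h^{*}$. You merely spell out the last step (equality in the weak-duality inequality forces both sides to be optimal) more explicitly than the paper does, which is fine.
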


In fact, we can characterize the metrics $h$ for which Problem \ref{problem-spec} associated
with $(d\mu_h, h)$ is {\em self-solvable}, that is, $g^*=h^*$ gives a solution, as those admitting
a minimal immersion of the above kind into a sphere (Corollary \ref{self-solvable}).   

\begin{Remark}\label{example-minimal-in-sphere}
Many examples of compact Riemannian manifolds $(M,h)$ that admit an isometric minimal
immersion into a sphere by first eigenfunctions are known.
These include all compact isotropy irreducible Riemannian homogeneous spaces \cite{Takahashi}.
Yau \cite{Yau} conjectured that the inclusion map of any compact embedded minimal
hypersurface in a sphere was such an immersion.
Choe and Soret \cite{ChoeSoret} verified Yau's conjecture for the Lawson surfaces and
the surfaces constructed by Karcher-Pinkall-Sterling in $S^3$.
Muto \cite{Muto2}, Tang and Yan \cite{TangYan, TangXieYan} verified it for compact minimal
isoparametric hypersurfaces in spheres.

On the other hand, there is a result indicating that such metrics should be
sufficiently rare.
Indeed, on the torus $T^2$, there are only two metrics, up to scaling, that admit
such an immersion \cite{ElSoufiIlias}.
\end{Remark}

\subsection{Linearization}\label{linearization}
As mentioned in Subsection \ref{ss-dual-problem}, Problem \ref{problem-dual} is
to minimize the linear functional $g^*\mapsto \int_M ( g^*, h )\, d\mu_1$
over the convex set $\{ g^*\in \mathcal{M}^* \mid \lambda_1(d\mu,g^*)\geq 1 \}$.
On the other hand, the functional $\varphi\mapsto \var(\varphi)$ of
Problem \ref{problem-emb} is quadratic.
We can, however, linearize it by changing variables.
We begin by observing that one can associate a positive symmetric smoothing operator
to a $C^\infty$-map into $l^2$.
For $\varphi=(u_k)_{k=1}^\infty\in \mathcal{F}$, define a $C^\infty$-function
$K_\varphi$ on $M\times M$ by
\begin{equation}\label{kernel}
K_\varphi(p,q)= \sum_{k=1}^\infty u_k(p) u_k(q),\quad (p,q)\in M\times M.
\end{equation}
Note that $K_\varphi$ has zero $d\mu_1$-average with respect to both the variables.
Let $L^2_0(d\mu_1)$ denote the space of functions in $L^2(d\mu_1)$ with zero $d\mu_1$-average.
Then the operator $X_\varphi\colon L^2_0(d\mu_1)\to L^2_0(d\mu_1)$ defined by
\begin{equation}\label{smoothing}
X_\varphi(f) = \int_M K_\varphi(\cdot,q) f(q)\, d\mu_1(q),\quad f\in
L^2_0(d\mu_1)
\end{equation}
is a positive symmetric $C^\infty$-smoothing operator with kernel function $K_\varphi$.

We introduce the following terminologies.

\begin{Definition}
Let $\varphi=(u_k)_{k=1}^\infty\in \mathcal{F}$, a $C^\infty$-map into $l^2$ with average
equal to the zero vector.
We call the kernel function $K_\varphi$ defined by \eqref{kernel} and the smoothing
operator $X_\varphi\colon L^2_0(d\mu_1)\to L^2_0(d\mu_1)$ defined by \eqref{smoothing} the
{\em Gram kernel} and the {\em Gram opoerator} associated with $\varphi$, respectively.
Let $\mathcal{X} := \mathcal{X}(d\mu_1)$ denote the set of all Gram operators.
\end{Definition}

Let $\mathcal{T}_\mathcal{X}$ denote the class of symmetric, densely defined operators
$T\colon L^2_0(d\mu_1)\to L^2_0(d\mu_1)$ that satisfy the following condition:\,\,
For any $X\in \mathcal{X}$,
$$
\langle T, X \rangle := \sum_{i=1}^\infty \langle T(e_i), X(e_i) \rangle
$$
converges for every complete orthonormal system $\{ e_i \}_{i=1}^\infty$ of
$L^2_0(d\mu_1)$ consisting of functions in the domain of $T$.
The identity operator $I$ and the positive Bakry-\'Emery Laplacian $-\Delta_{(d\mu, g^*)}$
lies in $\mathcal{T}_\mathcal{X}$.
In fact, for $\varphi\in \mathcal{F}$, it is easy to verify
\begin{equation*}
\langle I, X_\varphi \rangle = \int_M \| \varphi \|^2\, d\mu_1
\end{equation*}
and
\begin{equation}\label{pairing}
\langle -\Delta_{(d\mu,g^*)}, X_\varphi \rangle  = \int_M ( g^*, \varphi^* h_{l^2} )\, d\mu_1.
\end{equation}
Since the right-hand side of \eqref{pairing} is the pairing of
$g^*\in \mathcal{S}^*$ and $\varphi^* h_{l^2}\in \mathcal{S}$, we may write
$$
\langle -\Delta_{(d\mu,\bullet)}, X_\varphi \rangle
= \varphi^* h_{l^2}
$$
formally and regard $\langle -\Delta_{(d\mu,\bullet)}, X \rangle\in \mathcal{S}$
for $X\in \mathcal{X}$.
With this understood, Problem \ref{problem-emb} is reformulated in terms of the Gram operator
as follows.

\begin{Problem}\label{problem-op}
Over all $X\in \mathcal{X}$ satisfying $\langle -\Delta_{(d\mu,\bullet)}, X \rangle\leq h$,
maximize $\langle I, X \rangle$.
\end{Problem}

Observe that one maximizes a linear functional over a convex set of Gram operators.

\begin{Remark}\label{goering}
Let $G=(V,E)$ be a finite connected graph with vertex set $V$ and edge set $E$, equipped with
a vertex weight $s\colon V\to \R_{>0}$ and an edge length $l\colon E\to \R_{>0}$.
G\"oring-Helmberg-Wappler \cite{GoeringHelmbergWappler2} (see also \cite{GoeringHelmbergWappler1}) introduced a dual pair of optimization problems concerning graph embeddings and first eigenvalue of a certain graph Laplacian.  
The variable of their first-eigenvalue maximization problem is the edge weight $w\colon E\to \R_{\geq 0}$.
We formulate Problems \ref{problem-emb} and \ref{problem-spec} by interpreting the volume element $d\mu$, the Riemannian metric $h$, and the metric $g^*$ on the cotangent bundle as analogues of $s$, $l^2$ (squared length), and $w$, respectively.
As the resulting first-eigenvalue maximization problem for a manifold is rather different from the classical one involving the Riemannian Laplacian, a natural question arises:\,\, What is an analogue of the maximization problem for the first eigenvalue of the Riemannian Laplacian in the graph theory?
Gomyou and the present author \cite{GomyouNayatani1, GomyouNayatani2} formulated and studied a new first-eigenvalue maximization problem for a graph $G$ equipped with an edge length $l$.
The relevant Laplacian is the {\em Fujiwara Laplacian} \cite{Fujiwara}, which is a graph Laplacian defined only in terms of $l$.

It is worth noting that G\"oring et al.\ had also employed linearization in order to treat their dual pair of problems in the framework of semidefinite programming.
\end{Remark}

\section{Examples}\label{examples}
In Subsection \ref{direct}, we discussed examples for which Problems \ref{problem-emb} and
\ref{problem-spec} are solved in a trivial manner.
In this section, we solve these problems explicitly for some nontrivial examples.

\subsection{Flat tori}\label{ss-flat-tori}

The simplest nontrivial examples are provided by flat tori.
As mentioned in Remark \ref{example-minimal-in-sphere}, exactly two metrics on $T^2$,
up to scaling, admit an isometric minimal immersion into a sphere by first eigenfunctions.
These metrics are flat and it turns out that they can be used to solve Problems
\ref{problem-emb} and \ref{problem-spec} associated with any flat metric on $T^2$.
So we shall examine these metrics.
We begin by reviewing basic facts about flat metrics on $T^2$ and their spectral calculus
(see \cite{BergerGauduchonMazet}).

If $\Gamma$ is a lattice in $\R^2$, acting on $\R^2$ by translations, then the quotient space
$\R^2/\Gamma$ is diffeomorphic to $T^2$ and the Euclidean metric of $\R^2$ descends to a flat
metric on $\R^2/\Gamma$.
Any flat metric on $T^2$ is obtained in this way.   
The eigenvalues and associated
eigenfunctions of the corresponding Laplacian are given by $4\pi^2 \|\gamma^*\|^2$ and
$\exp (2\pi i \langle \gamma^*, x \rangle)$, respectively,
where $\gamma^*$ are elements of the dual lattice
$$
\Gamma^* = \{ \gamma^* \mid \mbox{$\langle \gamma^*, x \rangle \in \Z$ for all $x\in \Gamma$} \}
$$
of $\Gamma$.
By identifying lattices which induce isometric metrics up to scaling, we may assume that
$\Gamma = \Z (1,0)\oplus \Z (a,b)$, where $0\leq a\leq 1/2$, $b\geq \sqrt{1-a^2}$.
Let $h$ denote the corresponding flat metric.
Then $\Gamma^* = \Z (1,-a/b)\oplus \Z (0,1/b)$, and the first nonzero eigenvalue $\lambda_1(h)$
of $-\Delta_h$ is given by
$$
\lambda_1(h) = 4\pi^2\|(0,1/b)\|^2 = \frac{4\pi^2}{b^2}
$$
with multiplicity
$$
\left\{\begin{array}{cl} 6, & \mbox{if $(a,b)=(1/2, \sqrt{3}/2)$,}\\
4, & \mbox{if $b=\sqrt{1-a^2}$ and $a\neq 1/2$,}\\
2, & \mbox{if $b>\sqrt{1-a^2}$.}\end{array}\right.
$$

The two distinguished metrics mentioned above are the flat metrics corresponding to the two
highly-symmetric lattices: the {\em square lattice} $\Gamma_{\mathrm{SQ}}$ and the
{\em equilateral lattice} $\Gamma_{\mathrm{EL}}$ for the choices $(a,b)=(0,1)$ and
$(a,b)=(1/2,\sqrt{3}/2)$, respectively.
We examine closely the flat metric, denoted by $h_{\mathrm{EL}}$, corresponding to the lattice
$\Gamma_{\mathrm{EL}}$.
The first nonzero eigenvalue is $\lambda_1(h_{\mathrm{EL}}) = 16\pi^2/3$ with multiplicity $6$,
and a basis for the first eigenspace is given by the real and imaginary parts of the functions
\begin{equation}\label{EL-eigenfunction}
\exp (2\pi i (x-y/\sqrt{3})),\quad \exp (4\pi i y/\sqrt{3}),\quad \exp (2\pi i (x+y/\sqrt{3})).
\end{equation}
One can verify that the map
\begin{align*}
& \varphi\colon \mbox{$(x,y)$ $\mathrm{mod}$ $\Gamma_{\mathrm{EL}}$}
\in \R^2/\Gamma_{\mathrm{EL}}\mapsto\\
& \phantom{\varphi\colon \mbox{$(x,y)$}}
\frac{1}{\sqrt{8\pi^2}}
\big( \exp (2\pi i (x-y/\sqrt{3})), \exp (4\pi i y/\sqrt{3}), \exp (2\pi i (x+y/\sqrt{3}))
\big)\\
& \phantom{\varphi\colon \mbox{$(x,y)$ $\mathrm{mod}$ $\Gamma_{\mathrm{EL}}$}}
\in S^5\big(\sqrt{3/8\pi^2}\big)\subset \C^3
\end{align*}
is isometric with respect to $h_{\mathrm{EL}}$ and is one-to-one.
Thus, $\varphi$ is an isometric minimal embedding into a sphere whose coordinates are
first eigenfunctions.
By Proposition \ref{minimal-to-sphere}, the map $\varphi$ is an inflated map with respect
to $(d\mu_{h_{\mathrm{EL}}},h_{\mathrm{EL}})$, and the metric $g^*=h_{\mathrm{EL}}^*$
is a solution to Problems \ref{problem-spec} associated with
$(d\mu_{h_{\mathrm{EL}}},h_{\mathrm{EL}})$.
The flat metric $h_{\mathrm{SQ}}$ corresponding to the square lattice $\Gamma_{\mathrm{SQ}}$
is simpler. So we state conclusions only.
The first nonzero eigenvalue is $\lambda_1(h_{\mathrm{SQ}}) = 4\pi^2$ with multiplicity $4$,
and the map
$$
\varphi\colon \mbox{$(x,y)$ $\mathrm{mod}$ $\Gamma_{\mathrm{SQ}}$}\in \R^2/\Gamma_{\mathrm{SQ}}
\mapsto \frac{1}{2\pi}
\big(\exp (2\pi i x), \exp (2\pi i y)\big)\in S^3\big(1/\sqrt{2}\pi\big)\subset \C^2
$$
is an isometric minimal embedding into a sphere whose coordinates are first eigenfunctions.
The image of this embedding is called the {\em Clifford torus}.
Again, $\varphi$ is an inflated map with respect to $(d\mu_{h_{\mathrm{SQ}}},h_{\mathrm{SQ}})$,
and $g^*=h_{\mathrm{SQ}}^*$ is a solution to Problems \ref{problem-spec} associated with
$(d\mu_{h_{\mathrm{SQ}}},h_{\mathrm{SQ}})$.

We now use these two metrics to solve Problems \ref{problem-emb}
and \ref{problem-spec} for {\em every}\, flat metric on $T^2$.
For $0\leq a\leq 1/2$ and $b\geq \sqrt{1-a^2}$,
let $h_{a,b}$ denote the flat metric of $\R^2/\Z(1,0)\oplus \Z(a,b)$, pulled back to
$\R^2/\Z^2$
via the linear isomorphism fixing $(1,0)$ and sending $(0,1)$ to $(a,b)$.
Thus,
$$
h_{a,b} = dx^2 + 2adxdy + (a^2+b^2) dy^2.
$$
We will solve Problems \ref{problem-emb} and \ref{problem-spec} associated with
$(d\mu_{h_{a,b}}, h_{a,b})$.

First, we use the metric $h_{\mathrm{EL}} = dx^2 + dxdy + dy^2$.
A basis for the first eigenspace of $-\Delta_{h_{\mathrm{EL}}}$ is given by
the functions \eqref{EL-eigenfunction} pulled back to $\R^2/\Z^2$, that is,
$$
\exp (2\pi i x),\quad \exp (2\pi i y),\quad \exp (2\pi i (x+y)).
$$
Consider the map
\begin{align*}
& \psi_{p,q,r}\colon \mbox{$(x,y)$ $\mathrm{mod}$ $\Z^2$}\in \R^2/\Z^2\mapsto\\
&\phantom{\psi_{p,q,r}\colon } \frac{1}{\sqrt{8\pi^2}}
\big( p\, \exp (2\pi i x), q\, \exp (2\pi i y), r\, \exp (2\pi i (x+y)) \big) \in \C^3
\end{align*}
consisting of first eigenfunctions of $-\Delta_{h_{\mathrm{EL}}}$, where $p,q,r\geq 0$.
It satisfies ${\psi_{p,q,r}}^* g_{\C^3} = h_{a,b}$
if and only if  
$$
p = \sqrt{2(1-a)},\quad q = \sqrt{2(a^2+b^2-a)},\quad r = \sqrt{2a}.
$$
Denote the map $\psi_{p,q,r}$ with these choices of $p,q,r$ by $\varphi_{a,b}$.
Then the map $\varphi=\varphi_{a,b}$ and the metric $g^*=h_{\mathrm{EL}}^*$ satisfy
the equality conditions for \eqref{variance-lambda-inequality} as in Proposition
\ref{variance-inequality}.
Therefore, $\varphi=\varphi_{a,b}$ and $g^*=h_{\mathrm{EL}}^*$ are solutions to Problems
\ref{problem-emb} and \ref{problem-spec} associated with $(d\mu_{h_{a,b}}, h_{a,b})$,
respectively.
Here, it should be noted that the volume elements $d\mu_{h_{a,b}} = b\, dxdy$ differ only
by a multiplicative constant,
and therefore, $\Delta_{h_{\mathrm{EL}}} = \Delta_{(d\mu_{h_{a,b}}, h_{\mathrm{EL}})}$.
Observe that the maps $\varphi_{a,b}$ vary smoothly with the
parameters $a$, $b$ and degenerate to maps into $\R^4$ as $a\to 0$, that is, as the lattice
approaches to a rectangular one.

Secondly, we use the metric $h_{\mathrm{SQ}}=dx^2+dy^2$.
Consider the map
$$
\psi_{p,q}\colon \mbox{$(x,y)$ $\mathrm{mod}$ $\Z^2$}\in
\R^2/\Z^2\mapsto \frac{1}{2\pi} \big(p\, \exp (2\pi i x), q\,
\exp (2\pi i y)\big) \in \C^2
$$
consisting of first eigenfunctions of $-\Delta_{h_{\mathrm{SQ}}}$.
Since we have ${\psi_{p,q}}^* g_{\C^2} = p^2 dx^2 + q^2 dy^2$, these metrics can express
only flat metrics of rectangular tori.
In fact, choosing $p=1$, $q=b$, the map $\varphi_b:=\psi_{1,b}$ satisfies
${\varphi_b}^* g_{\C^2} = h_{0,b}$.
We conclude that the map $\varphi=\varphi_b$ and the metric $g^*=h_{\mathrm{SQ}}^*$ are
solutions to Problems \ref{problem-emb} and \ref{problem-spec} associated with
$(d\mu_{h_{0,b}}, h_{0,b})$, respectively.

While the maps $\varphi_{0,b}$ and $\varphi_b$ are the same, the metrics $h_{\mathrm{EL}}^*$
and $h_{\mathrm{SQ}}^*$ provide two distinct solutions to Problem \ref{problem-spec}.
Then the convexity of Problem \ref{problem-spec} as mentioned in Subsection \ref{ss-dual-problem}
predicts that there should exist a one-parameter family of metrics
joining these two metrics such that all members of the family are solutions to the problem.
In fact, the flat metrics $h_{a,\sqrt{1-a^2}}$ with $0\leq a\leq 1/2$
provide such a family.
To see this, note that the functions $\exp (2\pi i x)$ and $\exp (2\pi i y)$ are common
first eigenfucntions of $-\Delta_{h_{a,\sqrt{1-a^2}}}$, $0\leq a\leq 1/2$, and therefore,
the map $\varphi_b$ consists of first eigenfunctions of $-\Delta_{h_{a,\sqrt{1-a^2}}}$
for all $0\leq a\leq 1/2$.
Thus, the metrics $g^*=h_{a,\sqrt{1-a^2}}^*$, $0\leq a\leq 1/2$, are solutions to
Problems \ref{problem-spec} associated with $(d\mu_{h_{0,b}}, h_{0,b})$.

We record what we have observed as

\begin{Proposition}
For any flat metric $h$ on $T^2$, there exists an isometric embedding $\varphi\colon (T^2, h)\to
\R^6$ by first eigenfunctions of the equilateral flat metric $h_{\mathrm{EL}}$.
In particular, the map $\varphi$ is an inflated map with respect to $(d\mu_h, h)$,
and the metric $g^*=h_{\mathrm{EL}}^*$ is a solution to Problem \ref{problem-spec} associated with
$(d\mu_h, h)$.

If $h$ is a rectangular flat metric, that is, a flat metric corresponding to a rectangular lattice,
then the image of $\varphi$ lies in $\R^4$ and the square flat metric $h_{\mathrm{SQ}}$
provides another solution $h_{\mathrm{SQ}}^*$ to Problem \ref{problem-spec}.  
Moreover, the metrics $h_{a,\sqrt{1-a^2}}^*$, $0\leq a\leq 1/2$, joining
$h_{\mathrm{EL}}^*$ and $h_{\mathrm{SQ}}^*$, all provide solutions to
Problems \ref{problem-spec}.
\end{Proposition}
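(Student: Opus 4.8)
The plan is to verify, for each flat metric $h$ on $T^2$, that we can exhibit an explicit map $\varphi$ and an explicit metric $g^*$ satisfying the equality conditions of Proposition~\ref{variance-inequality}, namely that the components of $\varphi$ minimize the Rayleigh quotient $\mathrm{RQ}$ with respect to $(d\mu_h, g^*)$ among mean-zero functions, and that $(g^*, h - \varphi^* h_{l^2}) \equiv 0$. Once these are in place, Proposition~\ref{variance-inequality} gives equality in \eqref{variance-lambda-inequality}, which by the weak-duality inequality \eqref{weak-duality} forces $\varphi$ to be an inflated map (a solution of Problem~\ref{problem-emb}) and $g^*$ to be a solution of Problem~\ref{problem-spec}.

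First I would reduce to the normal form $h = h_{a,b} = dx^2 + 2a\,dx\,dy + (a^2+b^2)\,dy^2$ on $\R^2/\Z^2$ with $0 \le a \le 1/2$, $b \ge \sqrt{1-a^2}$, and note that $d\mu_{h_{a,b}} = b\,dx\,dy$ differs from $d\mu_{h_{\mathrm{EL}}}$ only by a positive constant, so $\Delta_{h_{\mathrm{EL}}} = \Delta_{(d\mu_{h_{a,b}}, h_{\mathrm{EL}})}$ and $\lambda_1(d\mu_{h_{a,b}}, h_{\mathrm{EL}}^*) = \lambda_1(h_{\mathrm{EL}}) = 16\pi^2/3$. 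Then I would take $g^* = h_{\mathrm{EL}}^*$ and $\varphi = \varphi_{a,b} = \psi_{p,q,r}$ with $p = \sqrt{2(1-a)}$, $q = \sqrt{2(a^2+b^2-a)}$, $r = \sqrt{2a}$, whose components are real and imaginary parts of first eigenfunctions of $-\Delta_{h_{\mathrm{EL}}}$ (hence Rayleigh-minimizers for $(d\mu_{h_{a,b}}, h_{\mathrm{EL}}^*)$, and automatically mean-zero). The remaining computation is the pullback identity $\varphi_{a,b}^* g_{\C^3} = h_{a,b}$; differentiating the three exponentials $e^{2\pi i x}$, $e^{2\pi i y}$, $e^{2\pi i(x+y)}$ and collecting the coefficients of $dx^2$, $dx\,dy$, $dy^2$ yields $\varphi^* h_{l^2} = \frac{p^2}{4\pi^2}\cdot 4\pi^2\,dx^2 + \cdots$, and matching against $h_{a,b}$ pins down $p,q,r$ exactly as stated. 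Since $\varphi^* h_{l^2} = h = h_{a,b}$ identically, the condition $(g^*, h - \varphi^* h_{l^2}) \equiv 0$ is trivially satisfied. Injectivity of $\varphi_{a,b}$ (to upgrade immersion to embedding into $\R^6$, or into $\R^4$ when $a = 0$ so $r = 0$) follows from the same argument as for the equilateral case $\varphi$ already treated in the text.

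For the second paragraph of the statement, when $h = h_{0,b}$ is rectangular I would instead take $g^* = h_{\mathrm{SQ}}^*$ and $\varphi = \varphi_b = \psi_{1,b}$, noting $d\mu_{h_{0,b}} = b\,dx\,dy$ is again a constant multiple of $d\mu_{h_{\mathrm{SQ}}}$ so that $\lambda_1(d\mu_{h_{0,b}}, h_{\mathrm{SQ}}^*) = \lambda_1(h_{\mathrm{SQ}}) = 4\pi^2$, and checking $\varphi_b^* g_{\C^2} = dx^2 + b^2\,dy^2 = h_{0,b}$ by the same one-line differentiation. For the interpolating family, the key observation is that $e^{2\pi i x}$ and $e^{2\pi i y}$ remain first eigenfunctions of $-\Delta_{h_{a,\sqrt{1-a^2}}}$ for all $0 \le a \le 1/2$: indeed for these lattices $\lambda_1$ has multiplicity $\ge 4$ and the dual-lattice vectors realizing it include $(1, -a/b)$ and $(0, 1/b)$ with $b = \sqrt{1-a^2}$, whose squared norms both equal $1/b^2$; hence $\varphi_b$ still consists of first eigenfunctions of $-\Delta_{h_{a,\sqrt{1-a^2}}}$, and since $d\mu$ for all these metrics is a constant multiple of $dx\,dy$, the pair $(\varphi_b, h_{a,\sqrt{1-a^2}}^*)$ satisfies the equality conditions of Proposition~\ref{variance-inequality} relative to $(d\mu_{h_{0,b}}, h_{0,b})$ — note that what must be checked here is $(h_{a,\sqrt{1-a^2}}^*, h_{0,b} - \varphi_b^* h_{l^2}) \equiv 0$, which holds because $\varphi_b^* h_{l^2} = h_{0,b}$ identically. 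I do not expect any genuine obstacle; the only point requiring slight care is bookkeeping the normalization constants $\frac{1}{\sqrt{8\pi^2}}$ versus $\frac{1}{2\pi}$ and confirming that the pullback metrics come out with no leftover cross terms, which is a routine but necessary verification. The conclusion then assembles the three cases exactly as stated in the proposition.
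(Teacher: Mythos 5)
Your proposal is correct and follows essentially the same route as the paper: reduce to the normal form $h_{a,b}$ on $\R^2/\Z^2$, exhibit the explicit first-eigenfunction maps $\psi_{p,q,r}$ (resp.\ $\psi_{1,b}$) with the pullback identity pinning down $p,q,r$, invoke the equality conditions of Proposition \ref{variance-inequality} together with the constancy of the volume elements, and observe that $e^{2\pi i x}$, $e^{2\pi i y}$ remain common first eigenfunctions along the family $h_{a,\sqrt{1-a^2}}$. No gaps.
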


\subsection{Berger spheres}\label{berger-S3}

The special unitary group of degree $2$,
$$
\mathrm{SU}(2) = \left\{ A(\zeta_1, \zeta_2) := \left(\begin{array}{cr} \zeta_1 & -\overline{\zeta_2}\\
\zeta_2& \overline{\zeta_1} \end{array} \right) \mid \zeta_1\overline{\zeta_1}
+\zeta_2\overline{\zeta_2}=1 \right\},
$$
is a compact Lie group, diffeomorphic to $S^3$.
The real one-forms $\sigma_1$, $\sigma_2$, $\sigma_3$ given by
$$
\sigma_1 + i \sigma_2 = i(\zeta_2 d\zeta_1-\zeta_1 d\zeta_2),\quad
\sigma_3 = -i ( \overline{\zeta_1} d\zeta_1 + \overline{\zeta_2} d\zeta_2)
$$
are left-invariant, and satisfy $d\sigma_a = 2\sigma_b\wedge \sigma_c$, where $(a,b,c)$
is a cyclic permutation of $(1,2,3)$.
Then
\begin{align*}
& E_1 = -i \left( \overline{\zeta_2} \frac{\partial}{\partial \zeta_1}
- \overline{\zeta_1} \frac{\partial}{\partial \zeta_2}
- \zeta_2 \frac{\partial}{\partial \overline{\zeta_1}}
+ \zeta_1 \frac{\partial}{\partial \overline{\zeta_2}} \right),\\
& E_2 = \overline{\zeta_2} \frac{\partial}{\partial \zeta_1}
- \overline{\zeta_1} \frac{\partial}{\partial \zeta_2}
+ \zeta_2 \frac{\partial}{\partial \overline{\zeta_1}}
- \zeta_1 \frac{\partial}{\partial \overline{\zeta_2}},\\
& E_3 = i \left( \zeta_1 \frac{\partial}{\partial \zeta_1}
+ \zeta_2 \frac{\partial}{\partial \zeta_2}
- \overline{\zeta_1} \frac{\partial}{\partial \overline{\zeta_1}}
- \overline{\zeta_2} \frac{\partial}{\partial \overline{\zeta_2}} \right)
\end{align*}
are the left-invariant vector fields such that
$\sigma_a(E_b)=\delta_{ab}$, and satisfy $[E_a,E_b] = -2 E_c$, where $(a,b,c)$
is as above.

For $t>0$, the Berger metric $h_t$ is the left-invariant metric on $\mathrm{SU}(2)$
given by
$$
h_t = {\sigma_1}^2 + {\sigma_2}^2 + t^2 {\sigma_3}^2.
$$
We will call it {\em small} if $t\leq 1$ and {\em large} if $t\geq 1$.
Urakawa \cite{Urakawa} calculated the first eigenvalue $\lambda_1(h_t)$ of the Laplacian
of $h_t$ and its multiplicity $m_1(h_t)$:
$$
\lambda_1(h_t) =
\left\{\begin{array}{cl} 8, & t\leq 1/\sqrt{6},\\ 2+1/t^2, & t\geq 1/\sqrt{6}
\end{array} \right.
$$
and
$$
m_1(h_t) = \left\{\begin{array}{cl} 3, & t<1/\sqrt{6},\\ 7, & t=1/\sqrt{6},\\
4, & t>1/\sqrt{6}. \end{array} \right.
$$
He also identified the corresponding eigenfunctions in terms of representaion theory.
More recently, Lauret \cite{Lauret} extended these results of Urakawa to all
left-invariant metrics on $\mathrm{SU}(2)$.
The following lemma verifies their results in an elementary manner.

\begin{Lemma}[\cite{Lauret}]\label{eigensystem-su2}
The positive Laplacian of the left-invariant metric
\begin{equation}\label{left-invariant}
h_{a,b,c} = \frac{1}{a} {\sigma_1}^2 + \frac{1}{b} {\sigma_2}^2 + \frac{1}{c} {\sigma_3}^2,
\quad a,b,c>0,
\end{equation}
on $\mathrm{SU}(2)$ is given by
\begin{equation}\label{berger-laplacian}
- \Delta_{h_{a,b,c}} = - a (E_1)^2 - b (E_2)^2 - c (E_3)^2.
\end{equation}
It has the following eigenvalues and eigenfunctions:
$$
\begin{array}{ll}
a+b+c, & \zeta_1,\,\, \zeta_2;\\
4(b+c), & \zeta_1^2 - \overline{\zeta_2}^2,\,\, \zeta_1\zeta_2 + \overline{\zeta_1}\overline{\zeta_2};\\
4(c+a), & \zeta_1^2 + \overline{\zeta_2}^2,\,\, \zeta_1\zeta_2 - \overline{\zeta_1}\overline{\zeta_2};\\
4(a+b), & \zeta_1 \overline{\zeta_2},\,\, \zeta_1 \overline{\zeta_1} - \zeta_2 \overline{\zeta_2}.
\end{array}
$$
\end{Lemma}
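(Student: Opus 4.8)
The plan is to establish \eqref{berger-laplacian} first, then verify the listed eigenfunctions by a direct computation organized by polynomial degree, and finally rule out any smaller eigenvalues. \emph{The Laplacian.} Since $\mathrm{SU}(2)$ is compact, hence unimodular, every left-invariant vector field is divergence-free with respect to Haar measure, which differs from the Riemannian volume of $h_{a,b,c}$ only by a positive constant. With $\widetilde E_1=\sqrt a\,E_1$, $\widetilde E_2=\sqrt b\,E_2$, $\widetilde E_3=\sqrt c\,E_3$ an $h_{a,b,c}$-orthonormal left-invariant frame, one has $\Delta_{h_{a,b,c}}u=\sum_k\bigl[\widetilde E_k(\widetilde E_k u)-(\nabla_{\widetilde E_k}\widetilde E_k)u\bigr]$; the Koszul formula together with $\mathrm{tr}\,\mathrm{ad}=0$ forces $\sum_k\nabla_{\widetilde E_k}\widetilde E_k=0$, and \eqref{berger-laplacian} follows.

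\emph{The eigenfunctions.} The decisive structural fact is that $E_1^2+E_2^2+E_3^2$ is a multiple of the Casimir element of $U(\mathfrak{su}(2))$, hence central; consequently $-\Delta_{h_{a,b,c}}$ commutes with the round Laplacian $-\Delta_{h_{1,1,1}}=-(E_1^2+E_2^2+E_3^2)$ and therefore preserves each of its eigenspaces $\mathcal H_d$, the space of restrictions to $S^3=\mathrm{SU}(2)$ of harmonic homogeneous polynomials of degree $d$ in $\zeta_1,\zeta_2,\overline{\zeta_1},\overline{\zeta_2}$, with $\dim\mathcal H_d=(d+1)^2$. On $\mathcal H_0$ the eigenvalue is $0$. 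Using the explicit first-order expressions for $E_1,E_2,E_3$ above, one finds $E_k^2\zeta_j=-\zeta_j$ for all $k,j$, so $\mathcal H_1=\mathrm{span}\{\zeta_1,\zeta_2,\overline{\zeta_1},\overline{\zeta_2}\}$ is a single eigenspace with eigenvalue $a+b+c$. On $\mathcal H_2$ a short computation gives
$$
E_1^2(\zeta_1^2-\overline{\zeta_2}^2)=0,\quad E_2^2(\zeta_1^2-\overline{\zeta_2}^2)=E_3^2(\zeta_1^2-\overline{\zeta_2}^2)=-4(\zeta_1^2-\overline{\zeta_2}^2),
$$
so $\zeta_1^2-\overline{\zeta_2}^2$, and similarly $\zeta_1\zeta_2+\overline{\zeta_1}\overline{\zeta_2}$ and the complex conjugate $\zeta_2^2-\overline{\zeta_1}^2$, have eigenvalue $4(b+c)$, while
$$
E_3(\zeta_1\overline{\zeta_2})=E_3(\zeta_1\overline{\zeta_1}-\zeta_2\overline{\zeta_2})=0,\quad E_1^2(\zeta_1\overline{\zeta_2})=E_2^2(\zeta_1\overline{\zeta_2})=-4\zeta_1\overline{\zeta_2},
$$
and likewise for $\zeta_1\overline{\zeta_1}-\zeta_2\overline{\zeta_2}$, exhibiting eigenvalue $4(a+b)$ on $\mathrm{span}\{\zeta_1\overline{\zeta_2},\ \overline{\zeta_1}\zeta_2,\ \zeta_1\overline{\zeta_1}-\zeta_2\overline{\zeta_2}\}$. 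The remaining line, eigenvalue $4(c+a)$, follows from the cyclic symmetry $(E_1,E_2,E_3)\mapsto(E_2,E_3,E_1)$, which is realized by an inner automorphism of $\mathrm{SU}(2)$ permuting $(a,b,c)$ cyclically; applying it to the functions just treated produces the missing eigenfunctions. The four eigenspaces so obtained, of dimensions $4,3,3,3$, account for all of $\dim\mathcal H_1+\dim\mathcal H_2=4+9$, so they are complete.

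\emph{No smaller eigenvalues.} It remains to show that for $d\geq3$ the spectrum of $-aE_1^2-bE_2^2-cE_3^2$ on $\mathcal H_d$ exceeds the least of the four values above. Relabelling $(E_1,E_2,E_3)$ by an inner automorphism — which permutes $(a,b,c)$ — we may assume $a\leq b\leq c$, so that least value is $\min\{a+b+c,\,4(a+b)\}$. On $\mathcal H_d$ one has $E_1^2+E_2^2+E_3^2=-4\ell(\ell+1)$ with $\ell=d/2$, and $E_1^2\geq-4\ell^2$ since the eigenvalues of $E_1$ are purely imaginary of modulus at most $2\ell$. Hence, using $c\geq b$ and $-E_3^2\geq 0$,
$$
-aE_1^2-bE_2^2-cE_3^2\ \geq\ -aE_1^2-b(E_2^2+E_3^2)\ =\ 4b\ell(\ell+1)+(b-a)E_1^2\ \geq\ 4\ell^2a+4\ell b,
$$
which for $\ell\geq3/2$ is at least $9a+6b$; and $9a+6b>\min\{a+b+c,\,4(a+b)\}$, because $9a+6b>4(a+b)$ always, while if $c<3(a+b)$ then $a+b+c<4(a+b)<9a+6b$. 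Therefore no eigenvalue coming from $\mathcal H_d$ with $d\geq3$ reaches $\lambda_1(h_{a,b,c})$, and the stated list of eigenvalues and eigenfunctions is complete.

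\emph{Main obstacle.} The only step that is not mechanical is the last one: the naive Casimir estimate $-aE_1^2-bE_2^2-cE_3^2\geq 4\ell(\ell+1)\min\{a,b,c\}$ does not suffice when $\min\{a,b,c\}$ is much smaller than the other two parameters, so one has to isolate a single direction as above to obtain a lower bound that uniformly beats all four listed values. Everything else reduces to routine bookkeeping of the action of $E_1,E_2,E_3$ on polynomials of degree at most $2$.
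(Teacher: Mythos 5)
Your proposal is correct, and for what the lemma actually asserts it follows the same route as the paper: derive \eqref{berger-laplacian} from the Koszul formula (the paper checks $\nabla_{E_i}E_i=0$ for each $i$; your unimodularity argument gives $\sum_k\nabla_{\widetilde E_k}\widetilde E_k=0$ directly, which is all that is needed), and then verify the listed eigenfunctions by direct computation — your degree-by-degree bookkeeping on $\mathcal{H}_1$ and $\mathcal{H}_2$ checks out, and the dimension count $4+3+3+3=13=\dim\mathcal{H}_1+\dim\mathcal{H}_2$ is a nice touch. Where you go beyond the paper is the final step: the lemma as stated only lists eigenvalues and eigenfunctions, and the paper's proof stops at "direct computation," deferring to Lauret for the fact that $\lambda_1(h_{a,b,c})$ is the minimum of the four listed values (which is what is actually used later, in \eqref{left-invariant-first-eigenvalue}). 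Your estimate $-aE_1^2-bE_2^2-cE_3^2\geq 4\ell^2 a+4\ell b>\min\{a+b+c,\,4(a+b)\}$ on $\mathcal{H}_d$, $d\geq 3$ (after ordering $a\leq b\leq c$), supplies a self-contained proof of that completeness statement, and you are right that the naive bound $4\ell(\ell+1)\min\{a,b,c\}$ would not suffice when $a$ is small; this is a genuine improvement in self-containedness over the paper's citation-based treatment.
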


As remarked in \cite{Lauret}, any left-invariant metric on $\mathrm{SU}(2)$ is isometric to
$h_{a,b,c}$ for some $a$, $b$, $c>0$.

\begin{proof}
Note that $( \sqrt{a} E_1, \sqrt{b} E_2, \sqrt{c} E_3)$ is an orthonormal frame field for $h_{a,b,c}$.
The Koszul formula for the Levi-Civita connection of a left-invariant metric reads
$$
2 \langle \nabla_XY, Z\rangle = \langle [X,Y], Z \rangle - \langle [X,Z], Y \rangle
- \langle [Y,Z], X \rangle,
$$
and we find $\nabla_{E_i} E_i = 0$, $1\leq i\leq 3$.
Therefore, \eqref{berger-laplacian} follows.
One can now verify the second assertion by direct computation.
\end{proof}

Let $\varphi_0\colon \mathrm{SU}(2)\to \R^4$ be the tautological embedding
$A(\zeta_1, \zeta_2)\mapsto (\zeta_1, \zeta_2)$ and
$\varphi_1\colon \mathrm{SU}(2)\to \R^3$ the Hopf map $A(\zeta_1, \zeta_2)\mapsto
( \zeta_1 \overline{\zeta_2},
\frac{1}{2} (\zeta_1 \overline{\zeta_1} - \zeta_2 \overline{\zeta_2}) )$.
For $p,q\geq 0$, consider the map $\varphi_{p,q} = p\,\varphi_0 \oplus q\,\varphi_1
\colon \mathrm{SU}(2)\to \R^7$.
Then since $\varphi_0^* h_{\R^4} = {\sigma_1}^2 + {\sigma_2}^2 + {\sigma_3}^2$
and $\varphi_1^* h_{\R^3} = {\sigma_1}^2 + {\sigma_2}^2$, we obtain
$$
\varphi_{p,q}^* h_{\R^7} = (p^2+q^2) \left( {\sigma_1}^2 + {\sigma_2}^2 \right)
+ p^2 {\sigma_3}^2.
$$
Suppose that $t\leq 1$. Then, choosing $(p, q)=\left(t, \sqrt{1-t^2} \right)$, we obtain
an isometric embedding $\varphi_{t, \sqrt{1-t^2}}\colon (\mathrm{SU}(2), h_t)\to \R^7$
by first eigenfunctions of $h_{1/\sqrt{6}}$.
Note that the unique left-invariant volume element
of unit total volume is the common normalized volume element of the metrics $h_t$,
and therefore, $\Delta_{h_{1/\sqrt{6}}}=\Delta_{(d\mu_{h_t}, h_{1/\sqrt{6}}^*)}$.
Thus, the embedding $\varphi_{t, \sqrt{1-t^2}}$ and the metric $h_{1/\sqrt{6}}^*$
satisfy the equality conditions for \eqref{variance-lambda-inequality} as in
Proposition \ref{variance-inequality}.
Therefore, $\varphi_{t, \sqrt{1-t^2}}$ and $h_{1/\sqrt{6}}^*$ give solutions to Problems
\ref{problem-emb} and \ref{problem-spec} associated with $(d\mu_{h_t}, h_t)$, respectively.

\begin{Remark}
The map $\varphi_{t, \sqrt{1-t^2}}$ is not essentially new.
For $t<1$, the Berger sphere $(\mathrm{SU}(2), h_t)$ is isometric to the geodesic sphere
$$
M_t := \left\{ (z_1:z_2:z_3)\in \C P^2 \mid z_1\overline{z_1} + z_2\overline{z_2}
= (t^{-2}-1) z_0^2 \right\}
$$
in $\C P^2$, up to scaling, where $z_1, z_2, z_3$ are the homogeneous coordinates
of $\C P^2$.
As a special instance of Remark \ref{example-minimal-in-sphere}, $\C P^2$ admits an
isometric embedding into $\R^8$ by its first eigenfunctions.
Restricting this map to $M_t$ and pulling back to $\mathrm{SU}(2)$ give the map
$\varphi_{t, \sqrt{1-t^2}}$, up to a dilation of $\R^7$.
\end{Remark}

In the case when $t\geq 1$, we will verify that the metric
$h_\infty^* = E_1\otimes E_1 + E_2\otimes E_2$,
which is only positive semidefinite on the cotangent bundle, is a
solution to Problem \ref{problem-spec}.
The corresponding Bakry-\'Emery Laplacian $\Delta_{(d\mu_{h_t}, h_\infty^*)}$
coincides with the sub-Laplacian of the unit sphere $S^3\subset \C^2$
endowed with the standard pair of strongly pseudoconvex CR structure and
compatible contact form (see \cite{Greenleaf}).
The eigenvalues and eigenfunctions of the latter operator are computed in
\cite{CowlingKlimaSikora, Stanton}.
It follows in particular that $\lambda_1(d\mu_{h_t}, h_\infty^*)=2$ with eigenfunctions
$\zeta_1$, $\zeta_2$.
Since $t\geq 1$, the tautological embedding $\varphi_0$, which is a map consisting of
the first eigenfunctions $\zeta_1$, $\zeta_2$ of $- \Delta_{(d\mu_{h_t}, h_\infty^*)}$,
satisfies the constraint \eqref{short} of Problem \ref{problem-emb} associated with
$(d\mu_{h_t}, h_t)$.
We also have
$$
( h_\infty^*, h_t - {\varphi_0}^* h_{\R^4} ) = ( E_1\otimes E_1 + E_2\otimes E_2,
(t^2-1) \sigma_3^2 ) \equiv 0.
$$
Thus, the map $\varphi_0$ and the metric $h_\infty^*$ satisfy the equality conditions
for \eqref{variance-lambda-inequality} as in Proposition \ref{variance-inequality}.
Therefore, $\varphi_0$ and $h_\infty^*$  give solutions to Problem \ref{problem-emb}
and Problem \ref{problem-spec} associated with $(d\mu_{h_t}, h_t)$, respectively.

As for Problem \ref{problem-emb} associated with $(d\mu_{h_1}, h_1)$, we have solutions
$h_1^*$, $h_{1/\sqrt{6}}^*$ and $h_\infty^*$ to it.
So, by the same reason as in Subsection \ref{ss-flat-tori}, there should exist a convex
family of solution metrics containing these three ones.
In fact, the metrics $h_t^*$, $1/\sqrt{6}\leq t\leq \infty$, constitute such a family,
since the map $\varphi_0$ is isometric with respect to $h_1$ and consists of
common first eigenfunctions $\zeta_1$, $\zeta_2$ of $-\Delta_{(d\mu_{h_1}, h_t^*)}$
with $t$ in the above range.
Thus, the map $\varphi_0$ and the metrics $h_t^*$, $1/\sqrt{6}\leq t\leq \infty$,
satisfy the equality conditions for \eqref{variance-lambda-inequality}
as in Proposition \ref{variance-inequality}.

Summarizing what we have observed, we obtain the following

\begin{Proposition}\label{Berger-summary}
The small Berger spheres $(\mathrm{SU}(2), h_t)$, $t\leq 1$, admit an isometric embedding
$\varphi_t\colon \mathrm{SU}(2) \to \R^7$ by first eigenfunctions of
$-\Delta_{d\mu_{h_t}, h_{1/\sqrt{6}}^*}$.
Therefore, the map $\varphi_t$ is an inflated map with respect to $(d\mu_{h_t,} h_t)$,
and the metric $h_{1/\sqrt{6}}^*$ is a solution to Problem \ref{problem-spec}
associated with $(d\mu_{h_t,} h_t)$.

For the large Berger spheres $(\mathrm{SU}(2), h_t)$, $t\geq 1$, the tautological embedding
$\varphi_0\colon \mathrm{SU}(2) \to \R^4$ is a common short map
by first eigenfunctions of $-\Delta_{(d\mu_{h_t}, h_\infty^*)}$ and satisfies the condition
\eqref{condition-equality} with $g^*=h_\infty^*$.
Therefore, $\varphi_0$ is an inflated map with respect to $(d\mu_{h_t}, h_t)$, and the positive
semidefinite metric $h_\infty^*$ is a solution to Problem \ref{problem-spec} associated with
$(d\mu_{h_t}, h_t)$.

For $t=1$, the metrics $h_t^*$, $1/\sqrt{6}\leq t\leq \infty$, all provide solutions to Problem
\ref{problem-spec} associated with $(d\mu_{h_1}, h_1)$.
\end{Proposition}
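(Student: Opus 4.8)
The plan is to derive every assertion from Proposition \ref{variance-inequality}. For each pair $(d\mu,h)$ appearing in the statement I will exhibit a short map $\varphi\in\mathcal{F}$ and a cometric $g^*\in\mathcal{M}^*$ for which equality holds in \eqref{variance-lambda-inequality}; by that proposition this amounts to checking that the components of $\varphi$ all lie in the first eigenspace of $-\Delta_{(d\mu,g^*)}$ (equivalently, that they minimize $\mathrm{RQ}$ among mean-zero functions) and that $(g^*,h-\varphi^*h_{l^2})\equiv 0$. Once such a pair is at hand, the chain $\var(\varphi)\le\Var(d\mu,h)\le 1/\Lambda_1(d\mu,h)\le \int_M(g^*,h)\,d\mu_1/\lambda_1(d\mu,g^*)=\var(\varphi)$ — whose second step is the weak-duality inequality \eqref{weak-duality} and whose third step is the definition of $\Lambda_1$ — collapses to equalities throughout, so $\varphi$ is an inflated map and $g^*$ solves Problem \ref{problem-spec}. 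The only inputs needed are Lemma \ref{eigensystem-su2}, the pullback identity $\varphi_{p,q}^*h_{\R^7}=(p^2+q^2)(\sigma_1^2+\sigma_2^2)+p^2\sigma_3^2$ recorded above, and the fact that the left-invariant volume element of $\mathrm{SU}(2)$ is unique up to a positive scalar, whence $\Delta_{(d\mu_{h_t},h_{a,b,c}^*)}=\Delta_{h_{a,b,c}}$ for every choice of parameters.

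For the small spheres $t\le 1$ I would take $\varphi_t:=\varphi_{t,\sqrt{1-t^2}}=t\,\varphi_0\oplus\sqrt{1-t^2}\,\varphi_1$ and $g^*=h_{1/\sqrt{6}}^*$. Substituting $(p,q)=(t,\sqrt{1-t^2})$ into the pullback identity gives $\varphi_t^*h_{\R^7}=\sigma_1^2+\sigma_2^2+t^2\sigma_3^2=h_t$, so $\varphi_t$ is an isometric immersion — an embedding, since $\varphi_0$ is injective and $t>0$ — and hence $(g^*,h_t-\varphi_t^*h_{l^2})\equiv 0$ automatically. It remains to identify the seven components of $\varphi_t$ — the real and imaginary parts of $\zeta_1,\zeta_2$ (from $\varphi_0$) and those of $\zeta_1\overline{\zeta_2}$ together with $\zeta_1\overline{\zeta_1}-\zeta_2\overline{\zeta_2}$ (from $\varphi_1$) — as first eigenfunctions of $-\Delta_{h_{1/\sqrt{6}}}$. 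Writing $h_{1/\sqrt{6}}$ in the form \eqref{left-invariant} gives $(a,b,c)=(1,1,6)$, and Lemma \ref{eigensystem-su2} then shows the first eigenvalue to be $\min\{a+b+c,\,4(b+c),\,4(c+a),\,4(a+b)\}=8$, attained exactly on the span of $\zeta_1,\zeta_2$ (since $a+b+c=8$) and of $\zeta_1\overline{\zeta_2},\,\zeta_1\overline{\zeta_1}-\zeta_2\overline{\zeta_2}$ (since $4(a+b)=8$). Thus all components of $\varphi_t$ lie in the first eigenspace, the equality conditions hold, and the first assertion follows.

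For the large spheres $t\ge 1$ I would take $\varphi=\varphi_0$ and $g^*=h_\infty^*=E_1\otimes E_1+E_2\otimes E_2\in\mathcal{M}^*$. Since $\varphi_0^*h_{\R^4}=\sigma_1^2+\sigma_2^2+\sigma_3^2$, one has $h_t-\varphi_0^*h_{\R^4}=(t^2-1)\sigma_3^2\ge 0$ for $t\ge 1$, so $\varphi_0$ is short, and $(h_\infty^*,(t^2-1)\sigma_3^2)=(t^2-1)\bigl(\sigma_3(E_1)^2+\sigma_3(E_2)^2\bigr)=0$, which is \eqref{condition-equality}. What remains is to check that $\zeta_1,\zeta_2$ are first eigenfunctions of $-\Delta_{(d\mu_{h_t},h_\infty^*)}=-(E_1)^2-(E_2)^2$. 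That each $\zeta_j$ is an eigenfunction with eigenvalue $2$ is immediate: $(E_3)^2\zeta_j=-\zeta_j$, while $(-(E_1)^2-(E_2)^2-(E_3)^2)\zeta_j=3\zeta_j$ by Lemma \ref{eigensystem-su2} with $(a,b,c)=(1,1,1)$, so $(-(E_1)^2-(E_2)^2)\zeta_j=2\zeta_j$. To see that $2$ is the smallest positive eigenvalue I would invoke the known spectrum of this operator — the CR sub-Laplacian of the standard $S^3\subset\C^2$ — as recorded in \cite{CowlingKlimaSikora, Stanton}; the upper bound $\lambda_1\le 2$ can alternatively be obtained directly from $\mathrm{RQ}(\zeta_1)=2$. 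With $\lambda_1(d\mu_{h_t},h_\infty^*)=2$ in hand, the equality conditions hold for the pair $(\varphi_0,h_\infty^*)$ and the second assertion follows; note that for $t>1$ the map $\varphi_0$ is not $h_t$-isometric, which is consistent with $h_\infty^*$ being only positive semidefinite.

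For $t=1$ I would again use $\varphi=\varphi_0$, which is now isometric for $h_1=\sigma_1^2+\sigma_2^2+\sigma_3^2$, so that $(g^*,h_1-\varphi_0^*h_{l^2})\equiv 0$ for every $g^*\in\mathcal{M}^*$; it then suffices to verify that $\zeta_1,\zeta_2$ are first eigenfunctions of $-\Delta_{(d\mu_{h_1},h_s^*)}$ for each $s\in[1/\sqrt{6},\infty]$. For finite $s$ the cometric $h_s^*$ is the $(a,b,c)=(1,1,1/s^2)$ case of Lemma \ref{eigensystem-su2}, whose first eigenvalue is $\min\{2+1/s^2,\,4(1+1/s^2),\,8\}$; this equals $2+1/s^2$ precisely when $1/s^2\le 6$, i.e.\ $s\ge 1/\sqrt{6}$, and is then attained on the span of $\zeta_1,\zeta_2$. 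For $s=\infty$ this is the computation of the preceding paragraph. Hence the equality conditions hold for $(\varphi_0,h_s^*)$ throughout the range, so each $h_s^*$ solves Problem \ref{problem-spec} associated with $(d\mu_{h_1},h_1)$, with $\varphi_0$ the common inflated map. I expect the one genuine obstacle in the whole argument to be pinning down the \emph{first} positive eigenvalue of the degenerate operator $-(E_1)^2-(E_2)^2$: Lemma \ref{eigensystem-su2} assumes $a,b,c>0$ and so does not apply to $c=0$, and the pointwise inequality $h_\infty^*\le h_s^*$ yields only $\lambda_1(d\mu,h_\infty^*)\le 2+1/s^2$, the wrong direction for the lower bound, so the bound $\lambda_1(d\mu,h_\infty^*)\ge 2$ must be extracted from the explicit sub-Laplacian spectrum.
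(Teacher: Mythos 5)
Your proposal is correct and follows essentially the same route as the paper: the same maps $\varphi_{t,\sqrt{1-t^2}}$ and $\varphi_0$, the same (co)metrics $h_{1/\sqrt{6}}^*$ and $h_\infty^*$, verification of the equality conditions of Proposition \ref{variance-inequality} via Lemma \ref{eigensystem-su2} and the constancy of the left-invariant volume element, and the same appeal to \cite{CowlingKlimaSikora, Stanton} for the first eigenvalue of the sub-Laplacian $-(E_1)^2-(E_2)^2$. Your explicit chain $\var(\varphi)\le\Var\le 1/\Lambda_1\le\int_M(g^*,h)\,d\mu_1/\lambda_1$ and your remark that the lower bound $\lambda_1(d\mu_{h_t},h_\infty^*)\ge 2$ cannot come from a pointwise comparison merely make explicit what the paper leaves implicit.
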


As we will see in Remark \ref{revisitBerger} below, for each $t>0$, a larger family of
left-invariant metrics on $\mathrm{SU}(2)$ gives solutions to Problem \ref{problem-spec}
associated with $(d\mu_{h_t}, h_t)$.

\subsection{Left-invariant metrics on $\mathrm{SU}(2)$}\label{ss-left-inavariant}

We consider Problems \ref{problem-emb} and \ref{problem-spec} associated with arbitrary left-invariant
metrics \eqref{left-invariant} on $\mathrm{SU}(2)$ and their common normalized volume element $d\mu$.
By change of variables and rescaling, we may assume that $a\leq b\leq c$ and $c=1$,
and set $h_{a,b} := (1/a) {\sigma_1}^2 + (1/b) {\sigma_2}^2 + {\sigma_3}^2$.
Since the metrics $h_{a,a}$ and $h_{a,1}$ are isometric to Berger metrics, possibly
up to scaling, we also assume that $a<b<1$.

We search for solutions to Problem \ref{problem-spec} associated with $(d\mu, h_{a,b})$
among left-invariant metrics.
Note that Proposition \ref{invariant-solution} justifies this strategy.
By working on the scale-invariant functional \eqref{scale-invariant-functional}, we may assume that the variable metrics
are $h^*_{u,v} := u E_1\otimes E_1 + v E_2\otimes E_2 + E_3\otimes E_3$, dual to
$h_{u,v} := (1/u) {\sigma_1}^2 + (1/v) {\sigma_2}^2 + {\sigma_3}^2$.
Therefore, the functional is
$$
\Phi(u,v) := \int_M (h_{u,v}^*, h_{a,b})\, d\mu \Bigm/ \lambda_1(d\mu,h_{u,v}^*)
= \Bigl( \frac{u}{a}+\frac{v}{b} + 1 \Bigr) \Bigm/ \lambda_1(d\mu,h_{u,v}^*).
$$
By Lemma \ref{eigensystem-su2}, we have
\begin{equation}\label{left-invariant-first-eigenvalue}
\lambda_1(d\mu, h_{u,v}^*) = \lambda_1(h_{u,v}) = \left\{
\begin{array}{cc} u+v+1, & (u,v)\in D_0,\\
4(u+v), & (u,v)\in D_1,\\
4(u+1), & (u,v)\in D_2,\\
4(v+1), & (u,v)\in D_3
\end{array} \right.
\end{equation}
with corresponding eigenfunctions
\begin{equation}\label{left-invariant-first-eigenfunction}
\zeta_1,\,\, \zeta_2;\quad
\zeta_1 \overline{\zeta_2},\,\, \zeta_1 \overline{\zeta_1} - \zeta_2 \overline{\zeta_2};\quad
\zeta_1^2 + \overline{\zeta_2}^2,\,\, \zeta_1\zeta_2 - \overline{\zeta_1}\overline{\zeta_2};\quad
\zeta_1^2 - \overline{\zeta_2}^2,\,\, \zeta_1\zeta_2 + \overline{\zeta_1}\overline{\zeta_2},
\end{equation}
respectively.
Here, $D_i$, $0\leq i\leq 3$, denote the domains in the quarter plane $Q=\{u>0, v>0\}$, given by
\begin{align*}
& D_0 = Q\setminus \bigcup_{i=1}^3 \mathrm{Int} D_i,\qquad
D_1 = Q\cap \left\{ u + v\leq \frac{1}{3} \right\},\\
& D_2 = Q\cap \left\{ - u + \frac{v}{3}\geq 1 \right\},\qquad
D_3 = Q\cap \left\{ \frac{u}{3} - v\geq 1 \right\}.  
\end{align*}
Note that $\Phi$ extends continuously to $\overline{Q}\setminus \{(0,0)\}$, since for
$(u,v)\in \partial Q\setminus \{(0,0\}$, the corresponding Bakry-\'Emery Laplacian
$-\Delta_{(d\mu, h_{u,v}^*)}$ coincides with the sub-Laplacian of the Carnot-Carath\'eodory
metric $h_{u,v}$
and \eqref{left-invariant-first-eigenvalue}, \eqref{left-invariant-first-eigenfunction} continue to hold.
Now it is an elementary matter to verify that the infimum of $\Phi$ in $Q$ is attained exactly
at $(u,v) = (0, 1/3)\in \partial D_1$ with $\Phi(0, 1/3) = \frac{(1/b)+3}{4}$.
Thus, the positive semidefinite metric $h_{0,1/3}^*$ is a candidate for a solution to
Problem \ref{problem-spec}.

We turn to finding inflated maps.
The sub-Laplacian of $h_{0, 1/3}$ has first eigenfunctions
$\zeta_1,\, \zeta_2,\, \zeta_1 \overline{\zeta_2},\, \zeta_1 \overline{\zeta_1} - \zeta_2 \overline{\zeta_2}$.
Since $a<b<1$, the map $\varphi_{p,q}$ as in Example \ref{berger-S3} with $(p,q) = (1, \sqrt{(1/b)-1})$
satisfies the constraint \eqref{short} of Problem \ref{problem-emb}.
Also, this map and the metric $h_{0, 1/3}^*$ satisfy the equality conditions for
\eqref{variance-lambda-inequality} as in Proposition \ref{variance-inequality}.
Therefore, $\varphi_{1, \sqrt{(1/b)-1}}$ and $h_{0, 1/3}^*$ give solutions to Problems \ref{problem-emb}
and \ref{problem-spec} associated with $(d\mu, h_{a,b})$, respectively.

From the above discussion, we obtain

\begin{Proposition}\label{left-inavariant-summary}
Let $h_{a,b,c}$ be the left-invariant metric on $\mathrm{SU}(2)$ given by
$$
h_{a,b,c} = \frac{1}{a} {\sigma_1}^2 + \frac{1}{b} {\sigma_2}^2 + \frac{1}{c} {\sigma_3}^2,
$$
and let $d\mu$ be the common normalized volume element of the metrics $h_{a,b,c}$.
Suppose that $a<b<c$.
Then the embedding
\begin{align*}
& \varphi\colon \mathrm{SU}(2)\to \R^7;\\
&\qquad A(\zeta_1, \zeta_2)\mapsto \Bigl( \sqrt{1/c}\, \zeta_1, \sqrt{1/c}\, \zeta_2,
\sqrt{(1/b)-(1/c)}\, \zeta_1 \overline{\zeta_2},
\frac{\sqrt{(1/b)-(1/c)}}{2}\, (\zeta_1 \overline{\zeta_1} - \zeta_2 \overline{\zeta_2})
\Bigr)
\end{align*}
is a short map by first eigenfunctions of $-\Delta_{(d\mu, h_{0,1/3,1}^*)}$ and satisfies
the condition \eqref{condition-equality} with $g^*=h_{0,1/3,1}^*$.
Therefore, $\varphi$ is an inflated map with respect to $(d\mu, h_{a,b,c})$,
and the positive semidefinite metric $h_{0, 1/3,1}^*$ gives a solution to Problem
\ref{problem-spec} associated with $(d\mu, h_{a,b,c})$.
\end{Proposition}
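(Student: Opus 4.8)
The plan is to pin down the optimal dual cometric by an explicit two-variable minimization among left-invariant metrics, and then to \emph{certify} that it is genuinely optimal — for Problem \ref{problem-emb} as well as for Problem \ref{problem-spec} — by exhibiting a short map for which \eqref{variance-lambda-inequality} of Proposition \ref{variance-inequality} becomes an equality; weak duality then forces both the map and the cometric to be extremal. To set up, by Remark \ref{remark-scaling} the solution set of Problem \ref{problem-spec} is unchanged under a rescaling of $h_{a,b,c}$, so I may assume $c=1$; the cases $a=b$ and $b=c$ yield Berger metrics, already treated in Subsection \ref{berger-S3}, so I assume $a<b<1$. Any cometric invariant under the automorphism group of $(\mathrm{SU}(2),d\mu,h_{a,b,c})$ is left-invariant, since this group contains all left translations ($d\mu$ and $h_{a,b,c}$ being left-invariant); hence Proposition \ref{invariant-solution} makes it natural to restrict the search for a dual optimizer to the left-invariant cometrics $h^*_{u,v}=u\,E_1\otimes E_1+v\,E_2\otimes E_2+E_3\otimes E_3$. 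Passing to the scale-invariant form \eqref{scale-invariant-functional} of Problem \ref{problem-dual}, the task becomes to minimize
$$
\Phi(u,v)=\Bigl(\frac{u}{a}+\frac{v}{b}+1\Bigr)\Bigm/\lambda_1(d\mu,h^*_{u,v})
$$
over the open quadrant $Q=\{u>0,\,v>0\}$.

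The second step is this minimization. Since $d\mu$ is proportional to the Riemannian volume of $h_{u,v}$, one has $\lambda_1(d\mu,h^*_{u,v})=\lambda_1(h_{u,v})$, which by Lemma \ref{eigensystem-su2} is the minimum of $u+v+1$, $4(u+v)$, $4(u+1)$, $4(v+1)$; determining which term is smallest produces the piecewise formula \eqref{left-invariant-first-eigenvalue} over regions $D_0,\dots,D_3$, with corresponding first eigenfunctions \eqref{left-invariant-first-eigenfunction}. I would then observe that Lemma \ref{eigensystem-su2}, which is proved by direct computation, persists when $u$ or $v$ vanishes — where $-\Delta_{(d\mu,h^*_{u,v})}$ is the sub-Laplacian of the Carnot-Carath\'eodory metric $h_{u,v}$ — so that $\Phi$ extends continuously to $\overline Q\setminus\{(0,0)\}$. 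A routine case analysis over $\overline{D_0},\dots,\overline{D_3}$ — exploiting $a<b$ to put weight in the numerator onto $v$, and $b<1$ on the $D_0$-piece — then shows that the infimum of $\Phi$ over $Q$ is attained exactly at the corner $(u,v)=(0,\tfrac13)\in\partial Q\cap\partial D_1$. In particular the candidate $h^*_{0,1/3,1}$ is only positive \emph{semi}definite, and at that corner the first eigenspaces coming from $D_0$ and from $D_1$ coincide, so $\zeta_1,\zeta_2,\zeta_1\overline{\zeta_2},\zeta_1\overline{\zeta_1}-\zeta_2\overline{\zeta_2}$ are all first eigenfunctions of $-\Delta_{(d\mu,h^*_{0,1/3,1})}$ (with first eigenvalue $\tfrac43$).

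The third step certifies optimality. Undoing the normalization, the map $\varphi$ in the statement is exactly $\varphi_{p,q}=p\,\varphi_0\oplus q\,\varphi_1$ of Subsection \ref{berger-S3} with $(p,q)=\bigl(\sqrt{1/c},\,\sqrt{1/b-1/c}\bigr)$; the formula for $\varphi_{p,q}^*h_{\R^7}$ recorded there gives $\varphi^*h_{\R^7}=\tfrac1b\sigma_1^2+\tfrac1b\sigma_2^2+\tfrac1c\sigma_3^2$, hence $h_{a,b,c}-\varphi^*h_{\R^7}=\bigl(\tfrac1a-\tfrac1b\bigr)\sigma_1^2\ge0$ because $a<b$, so $\varphi$ satisfies \eqref{short}. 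Its components minimize the Rayleigh quotient over mean-zero functions, equivalently are first eigenfunctions of $-\Delta_{(d\mu,h^*_{0,1/3,1})}$, by the second step; and since $h^*_{0,1/3,1}$ carries no $E_1\otimes E_1$-component whereas $h_{a,b,c}-\varphi^*h_{\R^7}$ is a multiple of $\sigma_1^2$, the pairing $\bigl(h^*_{0,1/3,1},\,h_{a,b,c}-\varphi^*h_{\R^7}\bigr)$ vanishes identically, which is condition \eqref{condition-equality}. Proposition \ref{variance-inequality} then makes \eqref{variance-lambda-inequality} an equality for the pair $(\varphi,h^*_{0,1/3,1})$; combined with the weak-duality inequality \eqref{weak-duality} this forces $\varphi$ to attain $\Var(d\mu,h_{a,b,c})$ — so $\varphi$ is an inflated map with respect to $(d\mu,h_{a,b,c})$ — and $h^*_{0,1/3,1}$ to realize the supremum in Problem \ref{problem-spec}.

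I expect the real difficulty to be conceptual rather than computational. The minimization of $\Phi$ only identifies the best cometric \emph{among left-invariant ones} and, taken alone, certifies nothing about optimality inside the full space $\mathcal M^*$ (indeed, a priori we do not even know a solution exists). The decisive point is that one can pair the degenerate optimizer $h^*_{0,1/3,1}$ with the explicit primal map $\varphi$ so that the ``defect'' $h_{a,b,c}-\varphi^*h_{\R^7}$ lies precisely along the kernel direction $\sigma_1^2$ of $h^*_{0,1/3,1}$, making \eqref{condition-equality} hold on the nose — after which Proposition \ref{variance-inequality} and weak duality close the argument. A secondary technical point that needs care is the behaviour of $\Phi$ on the boundary $\partial Q$: checking that the limiting Bakry-\'Emery Laplacians are the asserted sub-Laplacians and that \eqref{left-invariant-first-eigenvalue} and \eqref{left-invariant-first-eigenfunction} remain valid there.
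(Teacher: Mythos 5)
Your proposal is correct and follows essentially the same route as the paper: normalize $c=1$, minimize the scale-invariant functional $\Phi(u,v)$ over left-invariant cometrics to locate the degenerate candidate $h^*_{0,1/3,1}$ at the corner of $D_0$ and $D_1$, and then certify optimality in the full problem by checking that $\varphi_{p,q}$ is short with defect $(1/a-1/b)\sigma_1^2$ annihilated by $h^*_{0,1/3,1}$, so that equality in \eqref{variance-lambda-inequality} together with weak duality closes the argument. Your explicit remarks on why the left-invariant minimization alone proves nothing and on the boundary behaviour of $\Phi$ accurately identify the points the paper itself relies on.
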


\begin{Remark}\label{revisitBerger}
We revisit the Berger metrics.
Note that the metric $h_{a,a,1}$ coincides with the Berger metric $h_{\sqrt{a}}$
up to scaling.
So we shall work with $h_{a,a,1}$.
It can be verified that the functional $\Phi$ achieves its infimum at each point of
$$
\left\{
\begin{array}{ll}
\partial D_1, & a<1,\\ D_0,
& a=1,\\ \partial_\infty D_0, & a>1,
\end{array}
\right.
$$
where $\partial_\infty D_0$ denotes the boundary at infinity of $D_0$.
Thus, the metrics $h_{u,v,1}^*$ with $(u,v)$ in the above range provide solutions
to Problem \ref{problem-spec} associated with $(d\mu, h_{a,a,1})$.

The case $a> 1$ requires some care.
The metric $h_{u,v,1}^*$ along $\partial D_2$ is
$
h_{u,3u+3,1}^* = u\, E_1\otimes E_1 + (3u+3) E_2\otimes E_2 + E_3\otimes E_3,
$
which coincides with $E_1\otimes E_1 + \frac{3u+3}{u} E_2\otimes E_2 + \frac{1}{u}
E_3\otimes E_3$ up to scaling.
The latter metric approaches the positive semidefinite metric
$h_{1,3,0}^* = E_1\otimes E_1 + 3\, E_2\otimes E_2$, dual to the Carnot-Carath\'eodory
metric $h_{1,3,0}$,
as $(u,v)$ tends to infinity along $\partial D_2$.
Similarly, after suitable rescaling, $h_{u,v,1}^*$ approaches
$h_{1,1/3,0}^* = E_1\otimes E_1 + \frac{1}{3} E_2\otimes E_2$
as $(u,v)$ tends to infinity along $\partial D_3$.
In view of the scale-invariance of the functional \eqref{scale-invariant-functional},
it would be appropriate to interpret the metrics corresponding to the points of
$\partial_\infty D_0$ as the positive semidefinite metrics
$h_{1,v,0}^* = E_1\otimes E_1 + v\, E_2\otimes E_2$, $1/3\leq v\leq 3$.
In fact, the functional \eqref{scale-invariant-functional} is well-defined and attains
its infimum on these metrics.
\end{Remark}

\begin{Remark}
Note that other than the standard round metric, the metric $h_{1/6,1/6,1}$
is the only left-invariant metric, up to isometry and scaling, on $\mathrm{SU}(2)$
such that Problem \ref{problem-spec} is self-solvable.
Therefore, in spite of having high multiplicity $7$, the metrics $h_{a,b,1}$ with $a+b=1/3$
do not admit an isometric minimal immersion into a sphere by first eigenfunctions
unless $a=b=1/6$.
In fact, we can directly verify that these metrics are not extremal unless $a=b=1/6$
by computing the Berger functional on left-invariant metrics.
The computation also shows that the metric $h_{1/6,1/6,1}$ is a saddle-like
extremal point of the Berger functional.
\end{Remark}

\section{Nadirashvili-type theorem}

In this section, we shall prove an analogue of the Nadirashvili minimal surface theorem.
Our Nadirashvili-type theorem asserts that if a solution to Problem \ref{problem-spec} exists
and is positive definite everywhere, then a map consisting of first eigenfuctions of the
corresponding Bakry-\'Emery Laplacian gives a solution to Problem \ref{problem-emb}.
Throughout this section, let $M$ be a compact manifold of dimension $n$ equipped with a smooth
volume element $d\mu$ and a Riemannian metric $h$.

As a preliminary for the proof of the Nadirashvili-type theorem, we compute the variation of an
eigenvalue of the Bakry-\'Emery Laplacian $-\Delta_{(d\mu,g^*)}$ when the metric $g$ is smoothly deformed
(while the volume element $d\mu$ is fixed).

\begin{Lemma}\label{ev-variation}
Let $g_t^*$, $t\in (-\varepsilon, \varepsilon)$, be a smooth family of positive definite metrics
on $T^*M$.
Set $g^* := g_0^*$ and $\dot{g}^* := \frac{d}{dt} g_t^*|_{t=0}$.
Suppose that there exist a smooth function $\lambda(t)$ and a smooth family of $C^\infty$-functions
$\{ u(t) \}_{t\in (-\varepsilon, \varepsilon)}\subset C^\infty(M)$ which satisfy
$-\Delta_{(d\mu,g_t^*)} u(t) = \lambda(t) u(t)$ and $\int_M {u(t)}^2\, d\mu=1$.
Then
$$
\frac{d}{dt} \lambda(t)|_{t=0} = \int_M (\dot{g}^*, du\otimes du)\, d\mu,
$$
where $u = u(0)$.
\end{Lemma}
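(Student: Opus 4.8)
The plan is to avoid local coordinate computations and work entirely with the weak (variational) characterization of the Bakry-\'Emery Laplacian. Recall from Section 1 that $-\Delta_{(d\mu,g^*)}$ is the operator associated with the bilinear form $(u,v)\mapsto\int_M g^*(du,dv)\,d\mu$; thus the eigenvalue equation $-\Delta_{(d\mu,g_t^*)}u(t)=\lambda(t)u(t)$ is equivalent to
$$
\int_M g_t^*(du(t),dv)\,d\mu=\lambda(t)\int_M u(t)\,v\,d\mu\qquad\text{for all }v\in C^\infty(M).
$$
Taking $v=u(t)$ and using the normalization $\int_M u(t)^2\,d\mu=1$ gives the Rayleigh-quotient form
$$
\lambda(t)=\int_M g_t^*\bigl(du(t),du(t)\bigr)\,d\mu .
$$

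First I would differentiate this identity at $t=0$. Writing $u=u(0)$, $\dot u=\frac{d}{dt}u(t)|_{t=0}$ and $\dot g^*=\frac{d}{dt}g_t^*|_{t=0}$, and invoking the smoothness of the families $\{g_t^*\}$ and $\{u(t)\}$ to differentiate under the integral sign, the symmetry of $g^*$ yields
$$
\dot\lambda(0)=\int_M \dot g^*(du,du)\,d\mu+2\int_M g^*(d\dot u,du)\,d\mu .
$$
The first term is precisely $\int_M(\dot g^*,du\otimes du)\,d\mu$, since $(\,\cdot\,,\,\cdot\,)$ is the pairing of $\mathcal S^*$ with $\mathcal S$ and $du\otimes du$ is the relevant symmetric $2$-tensor. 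So it remains to show that the second term vanishes.

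For that I would use the eigenvalue equation at $t=0$ together with the differentiated normalization. By the defining property of $\Delta_{(d\mu,g^*)}$, one has $\int_M g^*(d\dot u,du)\,d\mu=\int_M(-\Delta_{(d\mu,g^*)}u)\,\dot u\,d\mu=\lambda(0)\int_M u\,\dot u\,d\mu$, while differentiating $\int_M u(t)^2\,d\mu=1$ at $t=0$ gives $\int_M u\,\dot u\,d\mu=0$. Hence the second term is zero and $\dot\lambda(0)=\int_M(\dot g^*,du\otimes du)\,d\mu$, as claimed. The computation is short; the only point requiring care is the analytic justification — that $t\mapsto u(t)$ is differentiable in a topology (e.g.\ $C^1$ or $H^1$) strong enough to differentiate the integrals and to use the weak formulation — which is guaranteed by the hypotheses and by the positive definiteness of the $g_t^*$ keeping $-\Delta_{(d\mu,g_t^*)}$ uniformly elliptic; I would remark on this rather than belabor it. Note that $\dot u$ itself never appears in the final formula — only the orthogonality relations it satisfies enter, which is the usual mechanism behind eigenvalue first-variation formulas.
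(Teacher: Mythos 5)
Your proof is correct and follows essentially the same route as the paper's: express $\lambda(t)$ as the Dirichlet energy $\int_M g_t^*(du(t),du(t))\,d\mu$, differentiate at $t=0$, and kill the cross term via integration by parts, the eigenvalue equation, and the differentiated normalization $\int_M u\,\dot u\,d\mu=0$. The only cosmetic difference is that you phrase the differentiation via the weak formulation while the paper does it in local coordinates.
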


\begin{proof}
Note that
$
\lambda(t) = \int_M |du(t)|_{g_t^*}^2\, d\mu.
$
We compute
\begin{eqnarray*}
\frac{\partial}{\partial t} |du(t)|_{g_t^*}^2\big|_{t=0} &=& \frac{\partial}{\partial t} \left(
{g_t}^{ij} \frac{\partial u(t)}{\partial x^i} \frac{\partial u(t)}{\partial x^j} \right) \big|_{t=0}\,\,
=\dot{g}^{ij} \frac{\partial u}{\partial x^i} \frac{\partial u}{\partial x^j}
+ 2 g^{ij} \frac{\partial u}{\partial x^i} \frac{\partial \dot{u}}{\partial x^j}\\
&=& (\dot{g}^*, du\otimes du) + 2 (du,d\dot{u})_{g^*},
\end{eqnarray*}
where $\dot{u}=\frac{d}{dt}u(t)|_{t=0}$.
Integration by parts gives
$$
\int_M (du,d\dot{u})_{g^*}\, d\mu = - \int_M (\Delta_{(d\mu,g^*)} u) \dot{u}\, d\mu = \lambda(0) \int_M u\dot{u}\, d\mu
= 0,
$$
since $\int_M {u(t)}^2\, d\mu=1$.
Therefore, we obtain the desired formula.
\end{proof}

In order to apply Lemma \ref{ev-variation}, we need the following lemma on analyticity of eigenvalues
of the Bakry-\'Emery Laplacian with respect to an analytic perturbation of positive defnite metric on $T^*M$.
The lemma is an analogue of the result of Berger \cite{Berger} and Bando-Urakawa \cite{BandoUrakawa}
for the Riemannian Laplacian.

\begin{Lemma}\label{analyticity}
Let $g_t^*$, $t\in (-\varepsilon, \varepsilon)$, be an analytic family of positive-definite metrics
on $T^*M$ of the form $g_t^* = \gamma(t) ( g^*+tk^*)$, where $k^*\in \mathcal{S^*}$ and $\gamma(t)$ is
a real analytic function such that $\gamma(0)=1$.
Let $\lambda$ be an eigenvalue of $-\Delta_{(d\mu,g^*)}$ of multiplicity $m$.
Then there exist $0<\varepsilon'<\varepsilon$, real analytic functions $\lambda^{(i)}(t)$ of
$t\in (-\varepsilon', \varepsilon')$ and analytic families of $C^\infty$ functions
$\{ u^{(i)}(t) \}_{t\in (-\varepsilon', \varepsilon')}\subset C^\infty(M)$, $1\leq i\leq m$, such that
\begin{enumerate}
\renewcommand{\theenumi}{\roman{enumi}}
\renewcommand{\labelenumi}{(\theenumi)}
\item $\lambda^{(i)}(0) = \lambda$, $1\leq i\leq m$,
\item $-\Delta_{(d\mu, g_t^*)} u^{(i)}(t) = \lambda^{(i)}(t) u^{(i)}(t)$, $1\leq i\leq m$,
$t\in (-\varepsilon', \varepsilon')$,
\item $\{ u^{(i)}(t) \}_{1\leq i\leq m}$ is orthonormal in $L^2(d\mu)$,
$t\in (-\varepsilon', \varepsilon')$.
\end{enumerate}
\end{Lemma}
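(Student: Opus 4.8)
The plan is to deduce the statement from the analytic perturbation theory for unbounded self-adjoint operators due to Kato, adapting the argument that Berger \cite{Berger} and Bando--Urakawa \cite{BandoUrakawa} gave for the Riemannian Laplacian. The first step is to exhibit $\{-\Delta_{(d\mu,g_t^*)}\}$ as a self-adjoint analytic family of type (A) acting on a \emph{fixed} Hilbert space. In a local chart write $d\mu=\rho\,dx$ with $\rho>0$; then $-\Delta_{(d\mu,g_t^*)}u=-\rho^{-1}\partial_j(\rho\,g_t^{ij}\,\partial_i u)$, so each $-\Delta_{(d\mu,g_t^*)}$ is a second-order elliptic operator whose coefficients, built from $\gamma(t)(g^{ij}+tk^{ij})$ and the $\rho$-factors, depend analytically on $t$ and smoothly on the point of $M$. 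Since $M$ is compact, $g_t^*$ is uniformly positive definite for $t$ in every compact subinterval of $(-\varepsilon,\varepsilon)$, so the operators are uniformly elliptic there; each is self-adjoint on $L^2(d\mu)$ with the \emph{$t$-independent} domain $H^2(M)$ and has compact resolvent. It is worth noting that here, unlike in the Riemannian case where the $L^2$-inner product itself varies with the metric, no preliminary conjugation by a multiplication operator is needed; the only genuine point is that replacing a Riemannian volume element by the arbitrary fixed $d\mu$ does not affect the remainder of the argument.

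Granting this, I would localize around $\lambda$. Since $-\Delta_{(d\mu,g^*)}$ has discrete spectrum, $\lambda$ is isolated of multiplicity $m$; choose $r>0$ so small that $\overline{D(\lambda,r)}\subset\C$ meets the spectrum only at $\lambda$. By upper semicontinuity of the spectrum for type (A) families there is $\varepsilon'\in(0,\varepsilon)$ such that, for $|t|<\varepsilon'$, the circle $\partial D(\lambda,r)$ lies in the resolvent set of $-\Delta_{(d\mu,g_t^*)}$ and the Riesz projection $P(t)=\frac{1}{2\pi i}\oint_{\partial D(\lambda,r)}\bigl(\zeta+\Delta_{(d\mu,g_t^*)}\bigr)^{-1}d\zeta$ is real-analytic in $t$ as a bounded operator on $L^2(d\mu)$, with $\mathrm{rank}\,P(t)=m$. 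For real $t$ the $P(t)$ are orthogonal projections, so the standard transformation-function construction for the analytic family $P(t)$ provides real-analytic unitaries $U(t)$ with $U(0)=\mathrm{Id}$ and $U(t)^{*}P(t)U(t)=P(0)$. Conjugating $(-\Delta_{(d\mu,g_t^*)})|_{\mathrm{Range}\,P(t)}$ by $U(t)$ yields a real-analytic family of self-adjoint endomorphisms of the fixed $m$-dimensional space $\mathrm{Range}\,P(0)$; Rellich's theorem for analytic families of Hermitian matrices then supplies real-analytic functions $\lambda^{(i)}(t)$ with $\lambda^{(i)}(0)=\lambda$ and a real-analytic orthonormal frame of eigenvectors $v^{(i)}(t)\in\mathrm{Range}\,P(0)$. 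Putting $u^{(i)}(t):=U(t)v^{(i)}(t)$ gives orthonormal eigenfunctions of $-\Delta_{(d\mu,g_t^*)}$ for the eigenvalues $\lambda^{(i)}(t)$, which establishes (i), (ii), and the $L^2$-part of (iii).

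It remains to upgrade the $u^{(i)}(t)$ to $C^\infty$-functions depending analytically on $t$ in the $C^\infty$-topology. Since $u^{(i)}(t)$ solves the elliptic equation $-\Delta_{(d\mu,g_t^*)}u^{(i)}(t)=\lambda^{(i)}(t)u^{(i)}(t)$ with coefficients smooth in the space variable, elliptic regularity gives $u^{(i)}(t)\in C^\infty(M)$; the analyticity of $t\mapsto u^{(i)}(t)$ in each Sobolev norm $\|\cdot\|_{H^k}$ follows either by differentiating the eigenvalue equation in $t$ and bootstrapping the elliptic a priori estimates, or by observing that $t\mapsto(\zeta+\Delta_{(d\mu,g_t^*)})^{-1}$ is analytic as a map $L^2(d\mu)\to H^{k}(M)$ for every $k$, whence $P(t)$ and therefore $u^{(i)}(t)$ are as well. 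I expect this regularity bootstrap to be the main point at which actual estimates, rather than abstract perturbation theory, intervene, but it is routine and is carried out exactly as in Bando--Urakawa \cite{BandoUrakawa}, the substitution of the fixed volume element $d\mu$ for $d\mu_g$ causing no change in the argument.
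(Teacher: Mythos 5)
Your proposal is correct and follows essentially the same route as the paper, which simply verifies that $\{-\Delta_{(d\mu,g_t^*)}\}$ is an analytic family of self-adjoint operators on the fixed Hilbert space $L^2(M,d\mu)$ with common domain and compact resolvent, and then invokes Rellich's perturbation theorem; you have merely unpacked the details of that citation (type (A) verification, Riesz projections, transformation function, finite-dimensional Rellich, elliptic bootstrap), including the correct observation that the fixed volume element spares one the conjugation needed in the Riemannian case.
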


\begin{proof}
One may verify that $\{-\Delta_{(d\mu, g_t^*)}\}_{t\in (-\varepsilon, \varepsilon)}$ is an analytic family
of unbounded self-adjoint operators in the Hilbert space $L^2(M, d\mu)$ with common domain of definition
and with compact resolvent.
Therefore, by the result due to Rellich \cite{Rellich1, Rellich2} (see also \cite{Kato, KrieglMichor}),
the eigenvalues and the eigenvectors of $-\Delta_{(d\mu, g_t^*)}$ may be parametrized real analytically
in $t$, as asserted in the lemma.
\end{proof}

\begin{Theorem}\label{nadirashvili-type-thm}
Suppose that a positive definite metric $g^*$ on $T^*M$ is a solution to Problem \ref{problem-spec}
associated with $(d\mu, h)$.
Then there exist first eigenfunctions $u_1,\dots,u_N$ of $-\Delta_{(d\mu,g^*)}$ such that the map
$\varphi = (u_1,\dots,u_N)\colon M\to \R^N$ is an isometric immersion with respect to the metric $h$.
Therefore, $\varphi$ is an inflated map with respect to $(d\mu, h)$.   
\end{Theorem}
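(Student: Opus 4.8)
The plan is to follow the strategy of Nadirashvili's original argument, adapted to the Bakry-\'Emery setting. Suppose, for contradiction, that no map built from first eigenfunctions of $-\Delta_{(d\mu,g^*)}$ is an $h$-isometric immersion. Let $\lambda := \lambda_1(d\mu,g^*)$ and let $u_1,\dots,u_m$ be an $L^2(d\mu)$-orthonormal basis of the first eigenspace. For each unit vector $a = (a_1,\dots,a_m)\in \R^m$, set $u_a := \sum_i a_i u_i$; the assumption then says that no finite collection of such $u_a$'s yields $\sum \bigl(du_{a_j}\otimes du_{a_j}\bigr) = h$. Equivalently, writing $P := \sum_{i=1}^m du_i\otimes du_i\in \mathcal{S}$ for the (symmetric-$2$-tensor-valued) ``first eigenspace Gram tensor'', the failure of isometry means that $h - P$ does not lie in the convex cone generated pointwise by the tensors $du_a\otimes du_a$. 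The first key step is to extract from this a ``separating'' symmetric contravariant $2$-tensor: there exists $k^*\in\mathcal{S}^*$ with $\int_M (k^*, h)\,d\mu \le 0$ (in fact $<0$ can be arranged after normalization) while $\int_M (k^*, du_a\otimes du_a)\,d\mu \ge 0$ for every unit $a\in\R^m$, i.e.\ $\int_M (k^*, du\otimes du)\,d\mu \ge 0$ for every first eigenfunction $u$. This is the Hahn-Banach/convexity heart of the argument and is the step I expect to be the main obstacle: one must choose the right finite-dimensional convex set (living in a space of sections, or after testing against the finitely many relevant directions in $\R^m$) so that the separation is genuinely strict in the direction of $h$, using that $P$ is an isometry nowhere forces a uniform gap.

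Having produced such a $k^*$, the second step is to perturb the metric. Consider $g_t^* := \gamma(t)(g^* + t k^*)$ for small $t>0$, choosing the real-analytic normalizing factor $\gamma(t)$, $\gamma(0)=1$, so that the constraint $\int_M (g_t^*, h)\,d\mu_1 = 1$ is preserved; differentiating the constraint at $t=0$ shows $\dot\gamma(0) = -\int_M(k^*,h)\,d\mu_1 \ge 0$. For $t$ sufficiently small $g_t^*$ is still positive definite, hence an admissible competitor in Problem \ref{problem-spec}. Now invoke Lemma \ref{analyticity}: the first eigenvalue branch splits into finitely many real-analytic functions $\lambda^{(i)}(t)$ with $\lambda^{(i)}(0) = \lambda$, and by Lemma \ref{ev-variation} applied to the (rescaled) eigenfunctions, each one-sided derivative at $t=0$ equals $\int_M (\dot g^*, du^{(i)}(0)\otimes du^{(i)}(0))\,d\mu$ where $\dot g^* = \dot\gamma(0)g^* + k^*$. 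Using $\int_M(\dot\gamma(0)g^*, du\otimes du)\,d\mu = \dot\gamma(0)\lambda \ge 0$ and the defining inequality for $k^*$, every branch has $\tfrac{d}{dt}\lambda^{(i)}(t)|_{t=0}\ge 0$, with strict inequality for at least one branch (this is where the strictness built into the choice of $k^*$ is spent). Consequently $\lambda_1(d\mu, g_t^*) = \min_i \lambda^{(i)}(t) > \lambda$ for small $t>0$ (the minimum of finitely many analytic functions all nondecreasing, at least one strictly increasing, at a common value — one checks the minimum itself increases, possibly after shrinking $\varepsilon$), contradicting the maximality of $g^*$.

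A couple of routine points remain to be dispatched. One must make sure that after the normalization $\int_M(g_t^*,h)\,d\mu_1 = 1$ the metric $g_t^*$ still maximizes, i.e.\ that the rescaling by $\gamma(t)$ does not spoil the sign of the eigenvalue variation — handled above by the observation $\dot\gamma(0)\ge 0$ together with $\lambda>0$. One also needs that ``$h - P$ not pointwise in the cone'' can be upgraded to a global separating functional; here compactness of $M$ and of the unit sphere in $\R^m$, plus continuity of $du_a\otimes du_a$ in $a$ and in the point of $M$, let one work with a single finite-dimensional separation after an appropriate averaging/integration, so no infinite-dimensional Hahn-Banach subtlety is actually needed. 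Once the contradiction is reached, the negation assumption fails: some $\varphi = (u_1,\dots,u_N)$ with each $u_j$ a first eigenfunction satisfies $\varphi^* h_{l^2} = h$, so $\varphi$ is an $h$-isometric immersion; and by Proposition \ref{variance-inequality} (the equality case of \eqref{variance-lambda-inequality}, with this $\varphi$ and the given $g^*$, noting \eqref{condition-equality} holds since $\varphi^*h_{l^2} = h$), $\varphi$ attains the bound and is therefore an inflated map with respect to $(d\mu,h)$.
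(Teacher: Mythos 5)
Your overall strategy (convex separation in $\mathcal{S}$ plus analytic perturbation of $g^*$ via Lemmas \ref{analyticity} and \ref{ev-variation}) is exactly the paper's, just arranged in contrapositive order: the paper first uses maximality to show that for \emph{every} admissible direction $k^*$ with $\int_M(k^*,h)\,d\mu=0$ there is a nonzero first eigenfunction $u$ with $\int_M(k^*,du\otimes du)\,d\mu=0$ (since $\tfrac{d}{dt}\lambda_1|_{t=0^+}=\min_i\dot\lambda^{(i)}\le 0\le\max_i\dot\lambda^{(i)}=\tfrac{d}{dt}\lambda_1|_{t=0^-}$ and the intermediate value theorem applies on the unit sphere of the eigenspace), and only then separates $h$ from the closed convex cone $\mathcal{C}$ generated by $\{du\otimes du\}$ to reach a contradiction. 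Your version runs the separation first and tries to contradict maximality directly; that can be made to work, but as written it has a genuine gap at the decisive step.

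The gap is your inference that branch derivatives satisfying $\dot\lambda^{(i)}(0)\ge 0$ for all $i$, with strict inequality for \emph{at least one} $i$, force $\lambda_1(d\mu,g_t^*)=\min_i\lambda^{(i)}(t)>\lambda$ for small $t>0$. This is false: the minimum is governed by the worst branch, and a branch with $\dot\lambda^{(i)}(0)=0$ may be constant or decrease at second order (take $\lambda^{(1)}(t)=\lambda-t^2$, $\lambda^{(2)}(t)=\lambda+t$). You need \emph{all} branch derivatives strictly positive, and your separating $k^*$ only delivers $\int_M(k^*,du\otimes du)\,d\mu\ge 0$, so the "strictness" you say you are spending has not actually been procured. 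The repair is available inside your own setup and is the analogue of the paper's replacement $k^{*\prime}=k^*-\frac{\langle k^*,h\rangle}{\langle h^*,h\rangle}h^*$: the separation theorem gives the \emph{strict} inequality $c:=\int_M(k^*,h)\,d\mu_1<0$, and your normalization $\gamma(t)=(1+tc)^{-1}$ then yields $\dot g^*=k^*-c\,g^*$, so that for each $L^2(d\mu)$-normalized branch eigenfunction
\begin{equation*}
\dot\lambda^{(i)}(0)=\int_M(k^*,du^{(i)}\otimes du^{(i)})\,d\mu - c\int_M|du^{(i)}|^2_{g^*}\,d\mu\;\ge\;-c\,\lambda\;>\;0 ,
\end{equation*}
uniformly in $i$. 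With that, all finitely many analytic branches strictly increase and the contradiction with maximality is genuine. You should also state the negation correctly ($h\notin\mathcal{C}$, not "$h-P$ not in the cone"), and note that $\mathcal{C}$ lies in a finite-dimensional subspace of $\mathcal{S}$ (the eigenspace is finite-dimensional), which is what makes the separation and the closedness of the cone unproblematic. The concluding sentence, that an isometric immersion by first eigenfunctions is inflated via the equality case of Proposition \ref{variance-inequality}, is fine.
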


\begin{proof}
The proof is similar to that of the Nadirashvili minimal surface theorem due to El Soufi-Ilias
\cite{ElSoufiIlias}.
By rescaling, we may assume that the metric $g^*$ satisfies $\int_M ( g^*, h )\, d\mu = 1$.
For any $k^*\in \mathcal{S}^*$ satisfying $\int_M ( k^*, h )\, d\mu = 0$,
there exists an analytic family of metrics $g_t^*$, $t\in (-\varepsilon, \varepsilon)$, on $T^*M$
such that $g_0^*=g^*$, $\dot{g}^*=k^*$ and $\int_M ( g_t^*, h)\, d\mu = 1$ for all $t$.
In fact, it suffices to set $g_t^* := g^*+tk^* \bigm/ \int_M ( g^*+tk^*, h)\, d\mu$.
Let $m$ denote the multiplicity of $\lambda_1(d\mu, g^*)$.
Let $\lambda^{(i)}(t)$ and $\{ u^{(i)}(t) \}$ be real analytic functions and analytic families
of $C^\infty$-functions as in Lemma \ref{analyticity}.
Then by Lemma \ref{ev-variation}, we have
$
\frac{d}{dt} \lambda^{(i)}(t)|_{t=0} =  \int_M ( k^*, du^{(i)}(0)\otimes du^{(i)}(0) )\, d\mu.
$

Now the function $t\mapsto \lambda_1(d\mu, g_t^*)$ has right and left derivatives:
$$
\frac{d}{dt} \lambda_1(d\mu, g_t^*)|_{t=0^+} = \min_{1\leq i\leq m} \frac{d}{dt} \lambda^{(i)}(t)|_{t=0},
\quad
\frac{d}{dt} \lambda_1(d\mu, g_t^*)|_{t=0^-} = \max_{1\leq i\leq m} \frac{d}{dt} \lambda^{(i)}(t)|_{t=0}.
$$
Since $\lambda_1(d\mu, g_t^*)$ takes maximum at $t=0$, we have
$
\frac{d}{dt} \lambda_1(d\mu, g_t^*)|_{t=0^+}\leq 0\leq \frac{d}{dt} \lambda_1(d\mu, g_t^*)|_{t=0^-}.
$
Therefore, by the intermediate value theorem, we conclude that there exists
$u\in E_1(d\mu,g^*)\setminus \{0\}$ such that $\int_M ( k^*, du\otimes du )\, d\mu = 0$,  
where $E_1(d\mu,g^*)$ denotes the first eigenspace of $-\Delta_{(d\mu, g^*)}$.
We have shown  

\smallskip\noindent
{\em Claim 1.}\quad For any $k^*\in \mathcal{S}^*$ satisfying $\int_M ( k^*, h )\, d\mu = 0$,
there exists $u\in E_1(d\mu,g^*)\setminus \{0\}$ such that $\int_M ( k^*, du\otimes du )\, d\mu = 0$.

\smallskip
Let $\mathcal{C}$ denote the convex hull of the set $\{ du\otimes du \mid u\in E_1(d\mu,g^*) \}$
in $\mathcal{S}$.
This is a closed convex cone in $\mathcal{S}$.
Then we have

\smallskip\noindent
{\em Claim 2.}\quad $h\in \mathcal{C}$.

\smallskip\noindent
Suppose that $h\notin \mathcal{C}$.
Then by the hyperplane separation theorem, there exists $k^*\in \mathcal{S}^*$ such that
$\langle k^*, l \rangle \geq 0$ for all $l\in \mathcal{C}$
and $\langle k^*, h \rangle < 0$.
Here, $\langle k^*, l \rangle := \int_M ( k^*, l )\, d\mu$ for $k^*\in \mathcal{S}^*$
and $l\in \mathcal{S}$.
Set
$
{k^*}' = k^* - \frac{\langle k^*, h \rangle}{\langle h^*, h \rangle} h^*,
$
so that $\langle {k^*}', h \rangle = 0$. Then for any $u\in E_1(d\mu,g^*)\setminus \{0\}$,
$$
\langle {k^*}', du\otimes  du \rangle = \langle k^*, du\otimes du \rangle
- \frac{\langle k^*, h \rangle}{\langle h^*, h \rangle} \langle h^*, du\otimes du \rangle > 0.
$$
This contradicts Claim 1.

Claim 2 implies that there exist $N\in \N$ and $u_k\in E_1(d\mu,g^*)$, $1\leq k\leq N$, such that
$h = \sum_{k=1}^N du_k\otimes du_k$.
Thus, the map $\varphi := (u_1,\dots, u_N)\colon M\to \R^N$ satisfies $h = \varphi^* h_{\R^N}$.
\end{proof}

\begin{Remark}\label{remark-to-n-type-thm}
(i)\,\,
The theorem implies that the multiplicity of the first eigenvalue $\lambda_1(d\mu, g^*)$ of
$-\Delta_{(d\mu,g^*)}$ is greater than or equal to $n+1$.

\smallskip\noindent
(ii)\,\, Under the assumption of the theorem, it follows from the last statement of Proposition
\ref{variance-inequality} that any inflated map with respect to $(d\mu, h)$ arises as in the theorem
and is an isometric immersion with respect to $h$.
\end{Remark}

By Theorem \ref{nadirashvili-type-thm}, we can characterize the metrics $h$ for which
Problem \ref{problem-spec} associated with $(d\mu_h, h)$ is self-solvable
(cf.~ Proposition \ref{minimal-to-sphere}).

\begin{Corollary}\label{self-solvable}
Let $(M,h)$ be a compact Riemannian manifold.
Then $g^*=h^*$ gives a solution to Problem \ref{problem-spec} associated with $(d\mu_h, h)$
if and only if $(M,h)$ admits an isometric minimal immersion into a sphere (of some radius)
by first eigenfunctions of $-\Delta_h$.
\end{Corollary}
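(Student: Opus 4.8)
The plan is to split the biconditional into its two implications, each of which is essentially already available in the paper, and then glue them with the classical Takahashi theorem.

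For the \emph{if} direction, suppose $(M,h)$ admits an isometric minimal immersion into a sphere (of some radius) by first eigenfunctions of $-\Delta_h$. This is exactly the hypothesis of Proposition \ref{minimal-to-sphere}, whose conclusion already asserts that $g^*=h^*$ provides a solution to Problem \ref{problem-spec} associated with $(d\mu_h,h)$. So this direction requires nothing beyond quoting that proposition.

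For the \emph{only if} direction, suppose $g^*=h^*$ solves Problem \ref{problem-spec} associated with $(d\mu_h,h)$. Since $h^*$ is a positive definite metric on $T^*M$, Theorem \ref{nadirashvili-type-thm} applies and yields first eigenfunctions $u_1,\dots,u_N$ of $-\Delta_{(d\mu_h,h^*)}$ such that $\varphi=(u_1,\dots,u_N)\colon M\to\R^N$ is an isometric immersion with respect to $h$. Because $d\mu=d\mu_h$ is a constant multiple of the Riemannian volume element of $h$, the Bakry-\'Emery Laplacian $-\Delta_{(d\mu_h,h^*)}$ coincides with the Riemannian Laplacian $-\Delta_h$ (as recorded in Subsection 1.1); hence $\varphi$ is an isometric immersion by first eigenfunctions of $-\Delta_h$, i.e.\ it satisfies the Takahashi condition. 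By the Takahashi theorem \cite{Takahashi} (already invoked in Subsection \ref{direct}), the image of $\varphi$ then lies in a sphere of radius $\sqrt{n/\lambda_1(h)}$ and $\varphi$ is minimal as an immersion into that sphere. This is precisely an isometric minimal immersion into a sphere by first eigenfunctions of $-\Delta_h$, completing the proof.

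I do not expect a genuine obstacle here: the corollary is a packaging of Proposition \ref{minimal-to-sphere} (one direction) and Theorem \ref{nadirashvili-type-thm} (the other direction) together with Takahashi's theorem. The only point deserving explicit care is the identification $-\Delta_{(d\mu_h,h^*)}=-\Delta_h$, which is what allows the output of Theorem \ref{nadirashvili-type-thm} to be read as a map by first eigenfunctions of the ordinary Laplacian and hence Takahashi's theorem to be applied verbatim.
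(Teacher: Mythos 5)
Your proposal is correct and follows exactly the route the paper intends: the corollary is introduced with ``By Theorem \ref{nadirashvili-type-thm}\dots (cf.~Proposition \ref{minimal-to-sphere})'', i.e.\ one direction is Proposition \ref{minimal-to-sphere} and the other is Theorem \ref{nadirashvili-type-thm} combined with the Takahashi theorem via the identification $-\Delta_{(d\mu_h,h^*)}=-\Delta_h$, precisely as you argue.
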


\section{Inflation dimension}

G\"oring-Helmberg-Wappler \cite{GoeringHelmbergWappler2} introduced the notion of
{\em rotational dimension} for a finite graph in terms of solutions to their
graph-embedding problem.
We can define an analogue of it for a manifold.

For a map $\varphi$ from a manifold into $l^2$, the (possibly infinite)
dimension of the minimal closed affine subspace of $l^2$ containing the image of $\varphi$
is called the {\em dimension} of $\varphi$.

\begin{Definition}
Let $M$ be a compact manifold.
If a smooth volume element $d\mu$ and a Riemannian metric $h$ are given, the minimum
dimension of a solution to Problem \ref{problem-emb} is denoted by $\infldim (d\mu,h)$
and is called the {\em inflation dimension} of $M$ with respect to $(d\mu, h)$.
If no solutions exist, set $\infldim (d\mu,h)=\infty$.

The supremum of $\infldim (d\mu,h)$ over all choices of $(d\mu, h)$ is denoted by
$\infldim (M)$ and is called the inflation dimension of $M$.
\end{Definition}

The inflated maps found in Section \ref{examples} provide upper bounds for the invariant
$\infldim (d\mu,h)$ when $d\mu=d\mu_h$.

\begin{Example} (i)\,\,
Let $h$ be a flat metric on $T^2$. Then
$$
\infldim (d\mu_h,h)\leq \left\{ \begin{array}{cl} 4, & \mbox{if $h$ is rectangular},\\
6, & \mbox{otherwise}. \end{array} \right.
$$

\smallskip\noindent
(ii)\,\, Let $h_t$, $t>0$, be a Berger metric on $S^3$. Then
$$
\infldim (d\mu_{h_t},h_t)\leq \left\{ \begin{array}{cl} 7, & \mbox{if $t<1$},\\
4, & \mbox{if $t\geq 1$}. \end{array} \right.
$$

\smallskip\noindent
(iii)\,\, Let $h_{a,b,c}$ be the left-invariant metric on $\mathrm{SU}(2)$ as in
\eqref{left-invariant} with $a<b<c$.
Then $\infldim (d\mu_{h_{a,b,c}},h_{a,b,c})\leq 7$.
\end{Example}

We will discuss the inflation dimension of a manifold closely in a forthcoming work.


\begin{thebibliography}{99}

\bibitem{BandoUrakawa} S.~Bando and H.~Urakawa, Generic properties of the eigenvalue
of the Laplacian for compact Riemannian manifolds, T\^ohoku Math.~J. {\bf 35} (1983),
155-172.

\bibitem{Berger} M.~Berger, Sur les premi\`eres valeurs propres des vari\'et\'es
riemanniennes, Compositio Math. {\bf 26} (1973), 129-149.

\bibitem{BergerGauduchonMazet} M.~Berger, P.~Gauduchon and E.~Mazet,
Le Spectre d'une Vari\'et\'e Riemannienne,
Lecture Notes in Math. {\bf 194}, Springer-Verlag, Berlin-New York, 1971.

\bibitem{ChoeSoret} J.~Choe and M.~Soret, First eigenvalue of symmetric minimal
surfaces in $S^3$. Indiana Univ.~Math.~J. {\bf 58} (2009), 269-281.

\bibitem{CowlingKlimaSikora} M.~Cowling, O.~Klima and A.~Sikora,
Spectral multipliers for the Kohn sublaplacian on the sphere in $\C^n$,
Trans.~Amer.~Math.~Soc. {\bf 363} (2011), 611-631.

\bibitem{ElSoufiIlias} A.~El Soufi and S.~Ilias, Riemannian manifolds admitting isometric
immersions by their first eigenfunctions, Pacific J.~Math. {\bf 195} (2000), 91-99.

\bibitem{Fujiwara} K.~Fujiwara,
Eigenvalues of Laplacians on a closed Riemannian manifold and its nets,
Proc.~Amer.~Math.~Soc. {\bf 123} (1995) 2585-2594.

\bibitem{GoeringHelmbergWappler1} F.~G\"oring, C.~Helmberg and M.~Wappler,
Embedded in the shadow of the separator, SIAM J.~Optim. {\bf 19} (2008), 472-501.

\bibitem{GoeringHelmbergWappler2} F.~G\"oring, C.~Helmberg and M.~Wappler,
The rotational dimension of a graph, J.~Graph Theory {\bf 66} (2011), 283-302.

\bibitem{GomyouNayatani1} T.~Gomyou, S.~Nayatani,
Maximization of the first Laplace eigenvalue of a finite graph, to appear in Linear Algebra Appl..

\bibitem{GomyouNayatani2} T.~Gomyou, S.~Nayatani,
Maximization of the first Laplace eigenvalue of a finite graph II, submitted.

\bibitem{Greenleaf} A.~Greenleaf, The first eigenvalue of a sub-Laplacian on a
pseudo-Hermitian manifold, Comm.~Partial Differential Equations {\bf 10} (1985),
191-217.

\bibitem{Hersch}J.~Hersch,
Quatre propri\'{e}t\'{e}s isop\'{e}rim\'{e}triques de membranes sph\'{e}riques homog\'{e}nes,
C.~R.~Acad.~Sci.~Paris S\`{e}r.~A-B {\bf 270} (1970), A1645-A1648.

\bibitem{JLNNP}D.~Jakobson, M.~Levitin, N.~Nadirashvili, N.~Nigam, I.~Polterovich,
How large can the first eigenvalue be on a surface of genus two?,
Int.~Math.~Res.~Not. (2005), no.~63, 3967-3985.

\bibitem{KarpukhinVinokurov} M.~Karpukhin, D.~Vinokurov, The first eigenvalue of the Laplacian
on orientable surfaces, Math.~Z. {\bf 301} (2022), 2733-2746.

\bibitem{Kato} T.~Kato, Perturbation theory for linear operators, Second edition,
Grundlehren der Mathematischen Wissenschaften, Band 132, Springer-Verlag,
Berlin-New York, 1976.

\bibitem{KrieglMichor} A.~Kriegl and P.~W.~Michor, The convenient setting of
global analysis, Math.~Surveys Monogr. {\bf 53}, American Mathematical Society,
Providence, RI, 1997.

\bibitem{Lauret} E.~A.~Lauret, The smallest Laplace eigenvalue of homogeneous 3-spheres,
Bull.~London Math.~Soc. {\bf 51} (2019), 49-69.

\bibitem{MatthiesenSiffert} H.~Matthiesen, A.~Siffert, Sharp asymptotics of the first eigenvalue on some degenerating surfaces, Trans.~Amer.~Math.~Soc. {\bf 373} (2020), 5903-5936.

\bibitem{Muto2} H.~Muto,
The first eigenvalue of the Laplacian of an isoparametric minimal hypersurface in a unit sphere,
Math.~Z. {\bf 197} (1988), 531-549.

\bibitem{Nadirashvili}N.~Nadirashvili,
Berger's isoperimetric problem and minimal immersions of surfaces,
Geom.~Funct.~Anal. {\bf 6} (1996), 877-897.

\bibitem{NakajimaShioya} H.~Nakajima and T.~Shioya,
Isoperimetric rigidity and distributions of $1$-Lipschitz functions,
Adv.~Math. {\bf 349} (2019), 1198-1233.

\bibitem{Nash} J.~Nash, The imbedding problem for Riemannian manifolds, Ann.~of~Math {\bf 63} (1956),
20-63.

\bibitem{NayataniShoda}S.~Nayatani and T.~Shoda,
Metrics on a closed surface of genus two which maximize the first eigenvalue
of the Laplacian,
C.~R.~Math.~Acad.~Sci.~Paris {\bf 357} (2019), 84-98.

\bibitem{Petrides} R.~Petrides,
Existence and regularity of maximal metrics for the first Laplace eigenvalue on surfaces,
Geom.~Funct.~Anal. {\bf 24} (2014), 1336-1376.

\bibitem{Rellich1} F.~Rellich, St\"orungstheorie der Spektralzerlegung V,
Math.~Ann. {\bf 118} (1942), 462-484.

\bibitem{Rellich2} F.~Rellich, Perturbation theory of eigenvalue problems, Gordon and Breach
Science Publishers, New York-London-Paris, 1969.

\bibitem{Ros} A.~Ros, On the first eigenvalue of the Laplacian on compact surfaces of genus three,
J.~Math.~Soc.~Japan {\bf 74} (2022), 813-828.

\bibitem{Stanton} N.~Stanton, Spectral invariants of CR manifolds, Michigan Math.~J.
{\bf 36} (1989), 267-288.

\bibitem{Takahashi} T.~Takahashi, Minimal immersions of Riemannian manifolds,
J.~Math.~Soc.~Japan {\bf 18} (1966), 380-385.

\bibitem{TangXieYan} Z.~Tang, Y.~Xie and W.~Yan,
Isoparametric foliation and Yau conjecture on the first eigenvalue II,
J.~Funct.~Anal. {\bf 266} (2014), 6174-6199.

\bibitem{TangYan} Z.~Tang and W.~Yan,
Isoparametric foliation and Yau conjecture on the first eigenvalue,
J.~Differential Geom. {\bf 94} (2013), 521-540.

\bibitem{Urakawa} H.~Urakawa, On the least positive eigenvalue of the Laplacian
for compact group manifolds, J.~Math.~Soc.~Japan {\bf 31} (1979), 209-226.

\bibitem{YangYau}P.~C.~Yang and S.~T.~Yau,
Eigenvalues of the Laplacian of compact Riemann surfaces and minimal submanifolds,
Ann.~Scuola Norm.~Sup.~Pisa Cl.~Sci.~(4) {\bf 7} (1980), 55-63.

\bibitem{Yau} S.~T.~Yau, Problem section, Seminar on Differential Geometry,
Annals of Mathematics Studies {\bf 102}, pp.~669-706, Princeton University Press,
Princeton, 1982.

\end{thebibliography}
\end{document}